\documentclass[11pt, reqno]{amsart}
\usepackage[a4paper, total={5.35 in, 8.3 in}]{geometry}
\usepackage{amsmath,amsfonts,amssymb}
\usepackage{amsfonts}
\usepackage{verbatim}
\usepackage{enumerate}
\usepackage{amsthm}
\usepackage{url}
\usepackage[colorlinks]{hyperref}
\usepackage{tikz}

\usepackage[initials]{amsrefs}
\makeatletter
\@namedef{subjclassname@2010}{%
  \textup{2010} Mathematics Subject Classification}
\makeatother

\newtheorem{theorem}{Theorem}[section]
\newtheorem{corollary}[theorem]{Corollary}
\newtheorem{lemma}[theorem]{Lemma}
\newtheorem{proposition}[theorem]{Proposition}

\theoremstyle{definition}



\newtheorem*{remark}{Remark}


\numberwithin{equation}{section}

\textwidth=14 cm
\textheight=23cm
\parindent=16pt
\oddsidemargin=-0.5cm
\evensidemargin=-0.5cm
\topmargin=-0.5cm

\newcommand{\F}{\mathbb{F}}
\newcommand{\Fn}{\mathbb{F}_{q^n}}
\DeclareMathOperator{\Tr}{Tr}
\DeclareMathOperator{\ord}{ord}

\newcommand{\Q}{\mathcal{Q}}

\newcommand{\X}{\mathcal{X}}
\newcommand{\Y}{\mathcal{Y}}
\newcommand{\Z}{\mathcal{Z}}
\newcommand{\N}{\mathcal{N}}
\newcommand{\M}{\mathcal{M}}


\begin{document}

\baselineskip=17pt


\title {The trace of  primitive and  $2$-primitive elements in finite fields, revisited}
 \renewcommand\rightmark  {THE TRACE  OF PRIMITIVE AND   $2$-PRIMITIVE ELEMENTS, REVISITED}
\author[S.D.~Cohen]{Stephen D. Cohen } \thanks{The first author is Emeritus Professor of Number Theory, University of Glasgow}

\address{6 Bracken Road, Portlethen, Aberdeen AB12 4TA, Scotland, UK}
\email{Stephen.Cohen@glasgow.ac.uk}
\author[G. Kapetanakis]{Giorgos Kapetanakis}
\address{Department of Mathematics, University of Thessaly, 3rd km. Old National Road Lamia-Athens, 35100 Lamia, Greece}
\email{gnkapet@gmail.com}

\date{\today}

\begin{abstract}
By definition primitive and $2$-primitive elements of a finite field extension $\Fn$ have order $q^n-1$ and $(q^n-1)/2$, respectively.  We have already shown  that, with minor reservations,  there exists a primitive element and a $2$-primitive element $\xi \in \Fn$ with prescribed trace in the ground field $\F_q$.  Here we amend our previous proofs of these results, firstly,  by a reduction of these problems to extensions of prime degree $n$ and,  secondly,    by deriving an exact expression for the number of squares in $\Fn$ whose trace has prescribed value in  $\F_q$. The latter corrects an error in the  proof in the case of  $2$-primitive elements. We also streamline the necessary  computations.

\end{abstract}

\subjclass[2010]{Primary 11T30; Secondary 11T06}

\keywords{Primitive elements, trace function}

\maketitle

\section{Introduction}

Let $q$ be the power of a prime $p$ and $n\geq  2$ be an integer.  Denote by $\F_q$  the finite field  of  order $q$ and by $\Fn$ its extension of degree $n$. 
A  primitive element of  $\Fn$ is a generator of its (cyclic)  mutiplicative group.  More generally, for a divisor $r$ of $q^n-1$, an $r$-primitive element has been defined as an element of order $(q^n-1)/r$. In this sense, a $1$-primitive element has the same meaning as a primitive element. We shall be concerned solely with primitive and $2$-primitive elements here.

We denote by $\Tr$ the trace function $\F_{q^n}\to\F_q$, that is
\[
\Tr(\xi) := \sum_{i=0}^{n-1} \xi^{q^i},\ \xi\in\F_{q^n} .
\]
By  {\em the trace problem for $r$-primitive elements}  will be meant a study of whether or not,   given $\beta \in \F_q$, there exists an $r$-primitive $\xi \in \Fn$ such that $\Tr(\xi)=\beta$.
The resolution of the trace problem for primitive elements in  \cite{cohen90} is a fundamental result.  Indeed,   in a  recent monograph \cite{hachenbergerjungnickel20},  Hachenberger and Jungnickel  devote their final chapter  to it  (see Result 14.1.1).
\begin{theorem}\label{thm:prim_trace}
Let $q$ be a prime power, $n$ an integer and $\beta\in\F_q$. Unless $(n,\beta) = (2,0)$ or $(n,q)=(3,4)$, there exists a primitive $\xi\in\F_{q^n}$ with $\Tr(\xi)=\beta$.
\end{theorem}
The original proof of  Theorem~\ref{thm:prim_trace} was obtained by assembling components from different sources and implicitly involved some direct verification by a computer.
For this reason another proof was given in \cite{cohenpresern05} which could be checked theoretically with the aid of a basic calculator. The solution in \cite{hachenbergerjungnickel20} contains additional  algebraic ideas. It too  reduces the level of computation required to a minimum.

More recently, the authors  resolved the basic trace problem for  $2$-primitive elements in $\Fn$ (necessarily for odd prime powers $q$), \cite{cohenkapetanakis20}, Theorem~1.3.
\begin{theorem}\label{thm:main}
Let $q$ be an odd prime power.
\begin{enumerate}
\item Let $\beta \in\F_q$. Suppose $n \geq 3$. Then there exists a  $2$-primitive element $\xi$ of $\F_{q^n}$ such that $\Tr(\xi)=\beta$.
\item  Suppose $n=2$.    Let $\beta \in\F_q^*$. There exists some $2$-primitive element $\xi$ of $\F_{q^2}$ such that $\Tr(\xi)=\beta$, unless $q=3,5,7,9,11,13$ or $31$.   Further, in the exceptional case when $q=9$ and $b\in \{\pm1,\pm i\}$ (where $i^2=-1$), there exists a $2$-primitive element $\xi$  in $\F_{81}$ with $\Tr(\xi) =\beta$. 
\end{enumerate}
\end{theorem}
 \begin{remark}
 When $n=2$, if $\xi \in \F_{q^2}$ has trace 0, then its order must be at most $2(q-1)$ and so $\xi$ cannot be $2$-primitive unless $q=3$ in which case $\F_{q^2}^* =\F_9^*= \{\pm 1, \pm i, \pm 1 \pm i\}$.  Here the primitive elements $\pm1\pm i$ have trace $\pm1$ and the $2$-primitive elements $\pm i$ do have trace $0$.  From now on, if $n=2$, assume $\beta \neq 0$.
 \end{remark}
At this point we need to admit that that the argument in the proof of Theorem~1.3  in \cite{cohenkapetanakis20}  relating to the terms in the expression (4.1) involving  the principal character $\chi_1$ contain an error which exaggerates the influence of these terms and is pervasive.    In particular, the remark at the end of Section~4.1 raises a problem that does not actually arise.  Because it is not easy to provide a brief corrigendum to rectify this flaw,  one aim of the present article is to purify the exposition.

Another important contribution to the analysis of the  trace problem is its  reduction to the study of extensions of prime degree $n$.  This forms the substance of Section~\ref{sec:primedegree}.  A subsdiary result which, given $\beta \in \F_q$, yields an exact expression for the number of  squares $\xi \in \Fn$ for which $\Tr(\xi)=(\beta)$ is provided in Section~\ref{squares}.
This eliminates the flaw in the original proof.    A further feature of this exposition is  more efficient working in the  calculations necessary to apply the theory.
\section{Reduction to extensions of prime degree}\label{sec:primedegree}
Here we show how the proofs of Theorems \ref{thm:prim_trace} and \ref{thm:main} can be deduced from the case in which $n$ is a prime.   In this section, given $d|n$ we denote by $\Tr_{n/d}$ the trace function from $\F_{q^n}$ to $\F_{q^d}$. The following lemma is also related to Proposition~14.2.2 in \cite{hachenbergerjungnickel20}
\begin{lemma}\label{induction} Suppose that  Theorem~\ref{thm:prim_trace} has  been established for all prime values of $n$ and all  prime powers $q$. Then  Theorem~\ref{thm:prim_trace} holds for arbitrary values of $n$ and $q$

Similarly, suppose Theorem~\ref{thm:main}   has been established  for all prime values of $n$ and  odd  prime powers $q$. Then Theorem~\ref{thm:main}  holds for arbitrary values of $n$ and $q$.
\begin{proof} Let $t$ denote the total  number of primes (including their mutiplicity) in the prime decomposition of $n$.  The proof is by induction on $t$ with the case of  $t=1$ corresponding to the stated assumption.

For the induction step write $n=\ell m$, where $\ell$ is any prime dividing $n$ and $1<m<n$.  Given $\beta \in \F_q$, by elementary linear algebra, it is evident that there are $q^{\ell-1}$ elements in $\F_{q^\ell}$ whose trace in $\F_q$ is $\beta$.  Hence, in every case, even if $\beta=0$, we can choose a {\em non-zero} $\alpha \in \F_{q^\ell}$  such that $\Tr_{\ell/1}(\alpha)=\beta$.  Next, for this non-zero  element $\alpha$  apply the induction hypothesis to the extension $\F_{q^n}=\F_{q^{\ell m}}$ over $\F_{q^l}$.  Now,  the exceptional cases in the theorems can only be relevant if  $q^\ell$  is a prime (which it is not) or, in Theorem~\ref{thm:main}, when $q^\ell=9$  and $m=2$.   Moreover, this latter situation  can occur only if   $q=3, \ell=2$  and $n=4$ (so that $t=2$).   We deduce that, with $r=1$ or $2$ respectively,  there exists an $r$-primitive element  $\xi \in \F_{q^n}$ such that $\Tr_{n/\ell}(\xi)=\alpha$, with the possible exception of  the case when $r=2$, $t=2$,  $q=3$,  $n=4, \ell=2$ and $\alpha= \pm 1$.  In this last situation, the only possible non-zero values for $\beta \in \F_q=\F_3$  are $\pm 1$  and we choose $\alpha = \mp 1$, respectively (since
$\Tr_{2/1}(\mp1)= \pm 1$, respectively).   But, from the final assertion of Theorem~\ref{thm:main}, there exists    $2$-primitive elements  $\xi_1, \xi_{-1} \in \F_{81}$  with   $\Tr_{4/2}(\xi_1)=1$ and $\Tr_{4/2}(\xi_{-1}) =-1$.     Finally, more generally $\Tr(\xi)=\Tr_{\ell/1}(\Tr_{n/\ell}(\xi))=\Tr_{\ell/1}(\alpha)=\beta$ in every case  and the lemma follows by induction.
\end{proof}
\end{lemma}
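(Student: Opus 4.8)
The plan is to induct on the total number $t$ of primes (counted with multiplicity) dividing $n$, the base case $t=1$ being exactly the hypothesis that the relevant theorem is already known in prime degree. Two structural facts drive the argument. First, transitivity of the trace, $\Tr(\xi)=\Tr_{\ell/1}(\Tr_{n/\ell}(\xi))$ for any prime $\ell\mid n$, lets me build the trace over $\F_q$ out of a trace over an intermediate field $\F_{q^\ell}$. Second, $r$-primitivity is an intrinsic property of $\F_{q^n}$: it refers only to the order $q^n-1=(q^\ell)^{n/\ell}-1$ of the group $\F_{q^n}^*$ and is indifferent to the choice of ground field, so an element that is $r$-primitive for the extension $\F_{q^n}/\F_{q^\ell}$ is $r$-primitive for $\F_{q^n}/\F_q$ as well.

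For the inductive step I would write $n=\ell m$ with $\ell$ prime and $1<m<n$, so that $m$ carries $t-1$ prime factors. Since the trace $\Tr_{\ell/1}\colon\F_{q^\ell}\to\F_q$ is $\F_q$-linear and onto, exactly $q^{\ell-1}\ge 2$ elements of $\F_{q^\ell}$ have trace $\beta$; because this fibre has more than one element I may select such an $\alpha$ that is \emph{nonzero}, even when $\beta=0$. Feeding this $\alpha$ into the induction hypothesis for the degree-$m$ extension $\F_{q^n}/\F_{q^\ell}$ produces an $r$-primitive $\xi\in\F_{q^n}$ with $\Tr_{n/\ell}(\xi)=\alpha$, and then $\Tr(\xi)=\Tr_{\ell/1}(\alpha)=\beta$ as required. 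The point of forcing $\alpha\neq 0$ is that it immediately kills the trace-zero exception $(m,\alpha)=(2,0)$ of Theorem~\ref{thm:prim_trace}.

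The substance of the proof, and its main obstacle, is verifying that no other exceptional configuration can sabotage this single application of the induction hypothesis. The ground field of the sub-problem is $\F_{q^\ell}$ with $\ell\ge 2$, so $q^\ell$ is never of prime order; this alone disposes of every degree-two exception of Theorem~\ref{thm:main} save the one with base field $\F_9$, and, after choosing $\ell$ judiciously among the prime divisors of $n$, of the exception $(3,4)$ of Theorem~\ref{thm:prim_trace} (which in the reduction can only surface for $q=2$, $n=6$, and is sidestepped by taking $\ell=3$ rather than $\ell=2$). What genuinely resists is the case $q^\ell=9$, $m=2$: since $9=3^2$ this forces $q=3$, $\ell=2$, $n=4$, and here $\ell=2$ is unavoidable because $4$ is a prime power, so the sub-problem $\F_{81}/\F_9$ itself lies on the exceptional list of Theorem~\ref{thm:main}. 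I would handle it not through the general statement but through the final, sharper assertion of Theorem~\ref{thm:main}, which supplies $2$-primitive elements of $\F_{81}$ whose trace over $\F_9$ lies in $\{\pm1,\pm i\}$; a direct trace computation in $\F_9=\F_3(i)$ (for instance $\Tr_{2/1}(\mp1)=\pm1$ and $\Tr_{2/1}(\pm i)=0$) shows every target $\beta\in\F_3$ is realised by an admissible nonzero $\alpha$, and transitivity finishes the case. Collecting these subcases completes the induction.
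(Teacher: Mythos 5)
Your proposal is correct and takes essentially the same route as the paper's proof: induction on the number of prime factors of $n$ counted with multiplicity, the decomposition $n=\ell m$ with a nonzero lift $\alpha\in\F_{q^\ell}$ of $\beta$, transitivity of the trace, and an appeal to the final assertion of Theorem~\ref{thm:main} to rescue the one irreducible obstruction $q^\ell=9$, $m=2$. You are in fact a little more scrupulous than the printed argument, which neither explains why the exception $(n,q)=(3,4)$ of Theorem~\ref{thm:prim_trace} (reachable only as $q=2$, $n=6$, $\ell=2$, and sidestepped by your choice $\ell=3$) causes no harm, nor treats $\beta=0$ in the case $q=3$, $n=4$, where your choice $\alpha=\pm i$ is needed.
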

Lemma~\ref{induction} significantly reduces the calculations in the previous proof.  In particular, in the trace problem for primitive elements, it eliminates the discussion  in \cite{cohenpresern05} of the case whe $n=4$ in Section~4.1 and with respect to the case $n=4,6$ in Section~5.  Similarly, in \cite{hachenbergerjungnickel20}, although the authors choose to perform calulations related to degrees $n=4, 6$ in Sections 14.5, 14.6,   there would be no need for these to establish simple existence.
This completes our discussion of the trace property for primitive elements.  We now focus on the property for $2$-primitive elements and, in particular, assume that $q$ is odd.

By Lemma~\ref{induction}  the material (including the calculations)  in Section~5.2 of \cite{cohenkapetanakis20} is now redundant.   Further, we  assume that $n$ is prime and thus consider only pairs $(q,n)$ where $q$ is an odd prime power and $n$ is prime.  Indeed, we shall additionally sift out pairs $(q,n)$ which are {\em odd} in the sense that $(q^n-1)/2$ is odd, which means that $n$  is an odd prime and $q \equiv 3 \mod 4$.

\begin{lemma}\label{oddqn}
Suppose $(q,n)$  is odd. Then $\xi \in \F_{q^n}$ is $2$-primitive if and only if $-\xi$ is primitive.
\end{lemma}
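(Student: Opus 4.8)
The plan is to translate both conditions into the cyclic structure of $\Fn^*$ and read them off as coprimality conditions on a discrete logarithm. Fix a generator $g$ of $\Fn^*$, which has order $N := q^n-1$. Since $(q,n)$ is odd, $M := (q^n-1)/2$ is \emph{odd}, so $N = 2M$ with $\gcd(2,M)=1$. The single most useful observation is that $-1$ is the unique element of order $2$ in the cyclic group $\Fn^*$, whence $-1 = g^{M}$; consequently, writing $\xi = g^{k}$, multiplication by $-1$ merely shifts the exponent by $M$, that is, $-\xi = g^{k+M}$.

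First I would record the standard exponent characterisations: $g^{k}$ is primitive iff $\gcd(k,N)=1$, and $g^{k}$ is $2$-primitive iff $\gcd(k,N)=2$. For the $2$-primitive condition, $\gcd(k,2M)=2$ forces $k$ even, say $k=2k'$, and then $\gcd(2k',2M)=2\gcd(k',M)$, so $\xi$ is $2$-primitive iff $\gcd(k',M)=1$. Next I would analyse $-\xi = g^{k+M}$: because $M$ is odd, $k+M$ is odd precisely when $k$ is even, so coprimality of the exponent to $2$ is automatic in exactly the relevant case, while $\gcd(k+M,M)=\gcd(k,M)$. Substituting $k=2k'$ gives $\gcd(k+M,2M)=\gcd(2k'+M,M)=\gcd(2k',M)=\gcd(k',M)$, where $\gcd(2,M)=1$ is used at the last step. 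Hence $-\xi$ is primitive iff $\gcd(k',M)=1$, which is exactly the condition for $\xi$ to be $2$-primitive, and tracking the parity in both directions yields the stated equivalence.

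The argument is short; the only real care needed — the ``obstacle'', such as it is — is the bookkeeping of parity and coprimality, ensuring that the hypothesis that $M=(q^n-1)/2$ is odd is invoked at every point where a factor of $2$ is detached from $M$. A cleaner packaging of the same idea uses $N=2M$ with $\gcd(2,M)=1$ to split $\Fn^* \cong C_2 \times C_M$ by the Chinese Remainder Theorem. Then $-1$ is the nontrivial element of the $C_2$ factor, $\xi$ is $2$-primitive iff its $C_2$-component is trivial and its $C_M$-component generates $C_M$, and $-\xi$ is primitive iff its $C_2$-component is nontrivial and its $C_M$-component generates $C_M$. Since multiplication by $-1$ only toggles the $C_2$-component while fixing the $C_M$-component, the two conditions coincide, proving the lemma.
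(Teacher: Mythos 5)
Your proof is correct and is essentially the paper's argument in discrete-logarithm clothing: the splitting $\F_{q^n}^*\cong C_2\times C_M$ with $M=(q^n-1)/2$ odd, where multiplication by $-1=g^M$ toggles only the $C_2$-component, is exactly the paper's decomposition into the square/nonsquare condition (using that $-1$ is a nonsquare) together with $\frac{q^n-1}{2}$-freeness. Both routes hinge on the same two facts, so no further comment is needed.
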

\begin{proof}
We have that $\frac{q^n-1}{2}$ is odd, so $\xi$ is $2$-primitive if and only if $\xi$ is both $\frac{q^n-1}{2}$-free and a square in   $\F_{q^n}$. Now $(-1)^{(q^n-1)/2}=-1$, thus $-1$ is a nonsquare in $\F_{q^n}$.   Hence $\xi \in \F_{q^n}$ is a nonsquare  if and only if $-\xi$ is a square.
Moreover, $\xi$ is $\frac{q^n-1}{2}$-free if and only if $-\xi$ is $\frac{q^n-1}{2}$-free.  The result follows.
\end{proof}

It follows from Lemma~\ref{oddqn} that when $(q,n)$ is odd, the number of $2$-primitive elements in $\F_{q^n}$ is the same as the number of primitive elements (namely $\phi(q^n-1)$, where $\phi$ is Euler's function).
In the situation of Lemma~\ref{oddqn} we can deduce the existence theorem for $2$-primitive elements from Theorem~\ref{thm:prim_trace}.
\begin{lemma}\label{oddthm}
Suppose $(q,n)$ is odd.  Then, given arbitrary $\beta \in \F_q$, there exists a $2$-primitive element of $\F_{q^n}$ with trace $\beta$.  (We can describe this in other words by saying that   $(q,n)$ {\em has the trace property for $2$-primitive elements}.)
\end{lemma}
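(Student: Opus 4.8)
The plan is to deduce the lemma directly from Theorem~\ref{thm:prim_trace} via the dictionary between $2$-primitive and primitive elements provided by Lemma~\ref{oddqn}. Since $(q,n)$ is odd, that lemma tells us that $\xi\in\F_{q^n}$ is $2$-primitive if and only if $-\xi$ is primitive. Because the trace is $\F_q$-linear we have $\Tr(-\xi)=-\Tr(\xi)$, so producing a $2$-primitive $\xi$ with $\Tr(\xi)=\beta$ is exactly the same as producing a primitive element $\eta=-\xi$ with $\Tr(\eta)=-\beta$.

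With this reformulation in hand, I would simply apply Theorem~\ref{thm:prim_trace} to the prescribed trace value $-\beta\in\F_q$. That theorem guarantees a primitive $\eta$ with $\Tr(\eta)=-\beta$ except in the two listed cases $(n,-\beta)=(2,0)$ and $(n,q)=(3,4)$. Having secured such an $\eta$, I would set $\xi=-\eta$ to obtain a $2$-primitive element with $\Tr(\xi)=\beta$.

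The only remaining point---and the one step meriting a moment's attention, although it is entirely routine---is to verify that neither exceptional case can occur under our hypothesis. By definition $(q,n)$ being odd means that $(q^n-1)/2$ is odd, which forces $n$ to be an odd prime and $q\equiv 3\pmod 4$. The first exception requires $n=2$, impossible since $n$ is an odd prime; the second requires $q=4$, impossible since $q\equiv 3\pmod 4$ is odd. Hence both exceptions are vacuous, Theorem~\ref{thm:prim_trace} applies unconditionally, and the desired $2$-primitive $\xi=-\eta$ exists for every $\beta\in\F_q$, completing the argument.

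Thus there is no serious obstacle: the entire content is the translation via Lemma~\ref{oddqn} together with the observation that the ``odd'' hypothesis automatically removes the two degenerate pairs excluded in Theorem~\ref{thm:prim_trace}.
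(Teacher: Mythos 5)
Your proposal is correct and follows exactly the paper's argument: apply Theorem~\ref{thm:prim_trace} to obtain a primitive element of trace $-\beta$ and negate it via Lemma~\ref{oddqn}. Your explicit check that the oddness of $(q,n)$ rules out the exceptional pairs $(n,\beta)=(2,0)$ and $(n,q)=(3,4)$ is a worthwhile detail the paper leaves implicit.
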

\begin{proof}
From Theorem~\ref{thm:prim_trace}, there exists a primitive element $\xi$ of $\F_{q^n}$ with trace $-\beta$.  By Lemma~\ref{oddqn}, $-\xi$ is $2$-primitive and $\Tr(-\xi)=-\Tr(\xi)=\beta$.
\end{proof}
\begin{remark}
Since Theorem~\ref{thm:prim_trace} was established in \cite{cohenpresern05}, without recourse to direct verification for any pair $(q,n)$, then the same can be said for Theorem~\ref{oddthm}.               
\end{remark}
From now on we assume  that $(q,n)$ is \emph{even}, i.e., that $(q^n-1)/2$  is even,  in which case $(-1)^{(q^n-1)/2}=1$ and so $-1$ is a square in $\F_{q^n}$.   This implies that, either $n=2$  or $n$ is odd and the prime $q \equiv 1 \mod 4$.  Further,  $\xi \in \F_{q^n}$ is $2$-primitive if and only if $-\xi$ is  $2$-primitive.    In this situation, a $2$-primitive element can be viewed simply as the square $\xi^2$ of a primitive element $\xi $.  Hence, our problem is to confirm that there exists a primitive element $\xi \in \F_{q^n}$ for which $\Tr(\xi^2) =\beta$.  Observe that, if $\xi$ is primitive, then both $(\pm \xi)^2$ yield the same $2$-primitive element $\xi^2$.  In particular, it is clear that the the total number of $2$-primitive elements in $\F_{q^n}$ is $\frac{\phi(q^n-1)}{2}$.
%
  

\section{The number of squares with prescribed trace}\label{squares}
At this point it is convenient to derive, using additive characters,  an explicit expresion for the number of non-zero squares  $\xi \in \Fn$  for which $\Tr(\xi)=\beta$  for any given $\beta \in \F_q$.  We continue to assume $n$ is prime and $(q,n)$ is even (although these restrictions could be lifted).  

We employ additive characters of both $\F_{q^n}$ and $\F_q$.
Let $\psi$ be the canonical additive character of $\F_q$, that is $\psi (g) = \exp (2\pi i\Tr_0(g)/p)$, where $\Tr_0$ stands for the {\em absolute trace} of $g\in\F_q$, i.e., its trace over $\F_p$, the prime subfield of $\F_q$.
Then an arbitrary additive character of $\F_q$ has the action which takes $g\in\F_q$ onto $\psi (ug)$ and thereby, as $u$ varies, we obtain all the $q$ additive characters of $\F_q$,
whose set we will denote by $\widehat{\F_q}$.  For the trivial character, take $u=0$.  Then  the characteristic function for elements of $\F_{q^n}$, with trace $\beta$, can be expressed as
\begin{equation}\label{chartrace}
t_\beta (\xi) := \frac{1}{q} \sum_{u\in\F_q} \bar\psi(u\beta) \tilde\psi(u\xi) ,
\end{equation}
where, $\bar\psi$ stands for the inverse of $\psi$ and $\tilde\psi$ stands for the \emph{lift} of $\psi$ to an additive character of $\F_{q^n}$, i.e., for every $\xi\in\F_{q^n}$, we have that $\tilde\psi(\xi) = \psi(\Tr(\xi))$. In particular, $\tilde\psi$ is the canonical character of $\F_{q^n}$.

  The next two lemmas recall  standard facts about the quadratic Gauss  sum over over $\Fn$  (see \cite{hua82}, Section~7.5, Theorem~5.4, \cite{lidlniederreiter97}, Section~5.2).  
\begin{lemma}\label{gauss}
 Let  $u \in \F_{q^n}$ and set  $$g_n(u)=\sum _{\xi \in \F_{q^n}}\psi(u\xi^2)= \sum_{\xi \in \F_{q^n}}\chi_2(\xi)\psi(u\xi),$$ where $\chi_2$ denotes the quadratic character and $\psi$ the canonical additive character on $\F_{q^n}$. Then  $$g_n(u)= \chi_2(u)g_n(1).$$
  \end{lemma}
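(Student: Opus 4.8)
The plan is to prove the two displayed assertions in turn: first that the two sums defining $g_n(u)$ coincide, and then the scaling relation $g_n(u) = \chi_2(u)g_n(1)$. Both reduce to elementary manipulations of character sums, so I expect the work to be bookkeeping rather than substance.

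For the equality of the two forms, I would reindex the sum $\sum_{\xi \in \Fn} \psi(u\xi^2)$ according to the value $y = \xi^2$. For any $y \in \Fn$ the number of $\xi$ with $\xi^2 = y$ equals $1 + \chi_2(y)$, under the usual convention $\chi_2(0) = 0$ (so $y = 0$ is counted once, while a nonzero $y$ is counted twice or not at all according as it is a square or not). Hence
\[
\sum_{\xi \in \Fn} \psi(u\xi^2) = \sum_{y \in \Fn}\bigl(1 + \chi_2(y)\bigr)\psi(uy) = \sum_{y \in \Fn}\psi(uy) + \sum_{y \in \Fn}\chi_2(y)\psi(uy).
\]
For $u \neq 0$ the first sum on the right vanishes by orthogonality of the nontrivial additive character $y \mapsto \psi(uy)$, leaving precisely the second stated form of $g_n(u)$.

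For the scaling relation I would work with the character form $g_n(u) = \sum_{\xi \in \Fn}\chi_2(\xi)\psi(u\xi)$. When $u = 0$ both sides vanish, since $\sum_{\xi \in \Fn}\chi_2(\xi) = 0$ and $\chi_2(0) = 0$. When $u \neq 0$, I substitute $\eta = u\xi$; as $\xi$ runs over $\Fn$ so does $\eta$, and using multiplicativity of $\chi_2$ together with $\chi_2(u^{-1}) = \chi_2(u)$ (valid because $\chi_2$ is real-valued of order two) yields
\[
g_n(u) = \sum_{\eta \in \Fn}\chi_2(u^{-1}\eta)\psi(\eta) = \chi_2(u^{-1})\sum_{\eta \in \Fn}\chi_2(\eta)\psi(\eta) = \chi_2(u)g_n(1).
\]

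Since the core of the argument is a change of variables, I do not anticipate a genuine obstacle. The only point requiring care is the behaviour at $u = 0$: the two forms of $g_n(u)$ agree only for $u \neq 0$, because the first sum equals $q^n$ there whereas the second form and the scaling relation both return $0$. I would therefore either restrict the stated identities to $u \neq 0$ or record this discrepancy explicitly, noting that the regime relevant to the later application is $u \neq 0$.
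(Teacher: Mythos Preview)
Your argument is correct and is precisely the standard elementary proof; the paper itself does not supply a proof for this lemma but simply records it as a known fact with references to Hua and Lidl--Niederreiter. Your observation about the $u=0$ discrepancy between the two displayed forms is accurate and worth flagging, and indeed in the paper the lemma is only ever invoked for $u\neq 0$ (the $u=0$ contribution being separated off beforehand).
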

 \begin{lemma}\label{lem:A} Let $q$ be a power of the prime $p$ and $u \in \Fn$. 
  
  If  $n=2$, then $g_2(u)  =\chi_2(u)\varepsilon_1 q$,  where
  \begin{equation}\label{eps1}
 \varepsilon_1= \begin{cases}\ \ 1, & \text{if } q \equiv 3 \  (\mathrm{mod}\ 4), \\
  -1, &  \text{if } q \equiv 1\  (\mathrm{mod}\ 4).
\end{cases} 
\end{equation}
 On the other hand, if   $n$ is an odd prime, then
$ g_n(u)= \chi_2(u)\varepsilon_2 q^{n/2}$,
where 
\begin{equation}\label{eps2}
 \varepsilon_2= \begin{cases}\ \ 1, & \text{if }  p \equiv 1\   (\mathrm{mod}\ 4) \text{ and }   q \text{ is a nonsquare},\\
 &    \text{or }   p \equiv 3 \  (\mathrm{mod}\ 4) \text{ and } q \text{ is a square but not a $4$th power},\\
 -1, &   \text{if }  p \equiv  1 \  (\mathrm{mod}\ 4)   \text{ and }   q \text{ is a square},\\
 &    \text{or }   p \equiv 3 \  (\mathrm{mod}\ 4)  \text{ and } q \text{ is  a $4$th power}. 
\end{cases} 
\end{equation}
\end{lemma}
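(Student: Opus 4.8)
The plan is to reduce everything to the single Gauss sum $g_n(1)$ and then invoke the classical evaluation of the quadratic Gauss sum over a finite field. By Lemma~\ref{gauss} we already have $g_n(u)=\chi_2(u)g_n(1)$, so the two asserted identities $g_2(u)=\chi_2(u)\varepsilon_1 q$ and $g_n(u)=\chi_2(u)\varepsilon_2 q^{n/2}$ follow at once from $g_2(1)=\varepsilon_1 q$ and $g_n(1)=\varepsilon_2 q^{n/2}$ respectively. Thus the whole content is the determination of $g_n(1)$, which is precisely the standard quadratic Gauss sum $\sum_{\xi}\chi_2(\xi)\psi(\xi)$ over $\F_{q^n}$.

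Next I would recall the classical formula (as in \cite{hua82,lidlniederreiter97}). Writing $q=p^m$, the field $\F_{q^n}$ has $p^{mn}$ elements, so with $s:=mn$ the degree over the prime field $\F_p$, the quadratic Gauss sum is
\begin{equation*}
g_n(1)=\begin{cases}(-1)^{s-1}p^{s/2}, & p\equiv 1\ (\mathrm{mod}\ 4),\\ (-1)^{s-1}i^{\,s}p^{s/2}, & p\equiv 3\ (\mathrm{mod}\ 4),\end{cases}
\end{equation*}
where $p^{s/2}=q^{n/2}$. (This is just the Davenport--Hasse lift of Gauss's evaluation of $\sum_a\chi_2(a)\psi_0(a)$ over $\F_p$, which equals $\sqrt{p}$ or $i\sqrt{p}$ according as $p\equiv 1$ or $3\pmod 4$.) It then remains only to read off the signs $(-1)^{s-1}$ and $i^{s}$ in the cases that occur and to rewrite the conditions on $s=mn$ and $p$ in terms of the square/fourth-power status of $q$ recorded in \eqref{eps1} and \eqref{eps2}.

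For $n=2$ we have $s=2m$, so $(-1)^{s-1}=-1$ and $p^{s/2}=q$. When $p\equiv 1\pmod 4$ this gives $g_2(1)=-q$, consistent with $\varepsilon_1=-1$ since then $q\equiv 1\pmod 4$ as well. When $p\equiv 3\pmod 4$ we get $g_2(1)=-i^{2m}q=(-1)^{m+1}q$, while $q=p^m$ is $\equiv 3$ or $\equiv 1\pmod 4$ according as $m$ is odd or even; matching these shows $g_2(1)=q$ when $q\equiv 3\pmod 4$ and $g_2(1)=-q$ when $q\equiv 1\pmod 4$, which is exactly \eqref{eps1}.

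For $n$ an odd prime the bookkeeping is slightly more delicate, and this is the step I expect to require the most care. Here $s=mn$ with $n$ odd, so the parity of $s$ agrees with that of $m$, i.e.\ with whether $q$ is a nonsquare or a square. If $p\equiv 1\pmod 4$ then $g_n(1)=(-1)^{mn-1}q^{n/2}$, and $mn-1$ is even exactly when $m$ (hence $q$) is a nonsquare, giving $\varepsilon_2=1$ for $q$ a nonsquare and $\varepsilon_2=-1$ for $q$ a square. If $p\equiv 3\pmod 4$ then $g_n(1)=(-1)^{mn-1}i^{mn}q^{n/2}$; for this to be real, as it must be, we need $mn$ even, i.e.\ $q$ a square, which is consistent with the standing assumption that $(q,n)$ is even (the combination $p\equiv 3$ with $q$ a nonsquare simply cannot arise). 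Writing $i^{mn}=(-1)^{mn/2}$ with $mn/2=(m/2)n$ and using that $n$ is odd, the parity of $mn/2$ is that of $m/2$; hence $m/2$ even (i.e.\ $q$ a fourth power) and $m/2$ odd (i.e.\ $q$ a square but not a fourth power) give the two subcases, and combining with $(-1)^{mn-1}=-1$ yields $\varepsilon_2=-1$ in the former and $\varepsilon_2=+1$ in the latter, matching \eqref{eps2}. The only genuine subtlety throughout is keeping the translation between the exponent $s=mn$ and the square/fourth-power dichotomy straight.
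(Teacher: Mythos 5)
Your proposal is correct and follows essentially the route the paper takes: the paper states this lemma without proof, simply citing the classical evaluation of the quadratic Gauss sum over $\F_{p^s}$ (Hua; Lidl--Niederreiter, Theorem 5.15), and your argument is exactly that evaluation applied with $s=mn$ together with the sign bookkeeping translating the parity of $s$ into the square/fourth-power status of $q$. Your verification of each case of \eqref{eps1} and \eqref{eps2} checks out, including the observation that the combination $p\equiv 3\pmod 4$ with $q$ a nonsquare is excluded by the standing assumption that $(q,n)$ is even.
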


  \begin{proposition}\label{m=1}
  Assume $(q,n$) is even and $n$ is a prime. Given $\beta \in \F_q$, let  $\M_\beta$  be the number  non-zero {\em squares} $\xi \in  \F_{q^n}$ with $\Tr(\xi)=\beta$. 
  
   If $n=2$  then,  for $\beta \in \F_q^*$, 
 \begin{equation}\label{eq:M1} 
 \M_\beta=\frac{1}{2}(q-\varepsilon_1),
    \end{equation}
   where \[\varepsilon_1=  
   \begin{cases}\;\;1,  &  \text{if } q \equiv 3 \mod 4\\
   -1, &  \text{if } q \equiv 1 \mod 4
   \end{cases}\]

On the other hand, if $n$ is an  odd prime, then
\begin{equation}\label{eq:M2}
\M_\beta= \begin{cases}\frac{1}{2}\left(q^{n-1}-1\right),  & \text{if } \beta=0,\\
\frac{1}{2}\left(q^{n-1} +\eta(\beta)q^{\frac{n-1}{2}}\right), & \text{if } \beta \neq 0,
\end{cases}
\end{equation}
where $\eta$ denotes the quadratic character in $\F_q$.
    \end{proposition}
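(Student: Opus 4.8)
The plan is to count, instead of the nonzero squares directly, their preimages under the squaring map. Since the characteristic is not $2$, the map $y\mapsto y^2$ sends $\F_{q^n}^*$ two-to-one onto the nonzero squares, so if I put $N_\beta:=\#\{y\in\F_{q^n}:\Tr(y^2)=\beta\}$, then $\M_\beta=\tfrac12\bigl(N_\beta-\delta\bigr)$, where $\delta=1$ if $\beta=0$ and $\delta=0$ otherwise, the correction accounting for the contribution of $y=0$. Everything thus reduces to evaluating $N_\beta$. To do so I would insert the characteristic function \eqref{chartrace} and interchange the order of summation, obtaining
\[
N_\beta=\sum_{y\in\F_{q^n}}t_\beta(y^2)=\frac1q\sum_{u\in\F_q}\bar\psi(u\beta)\sum_{y\in\F_{q^n}}\tilde\psi(uy^2)=\frac1q\sum_{u\in\F_q}\bar\psi(u\beta)\,g_n(u),
\]
recognising the inner sum as the Gauss sum $g_n(u)$ of Lemma~\ref{gauss} (recall $\tilde\psi$ is the canonical character of $\F_{q^n}$). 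The $u=0$ term contributes $q^{n-1}$, and for $u\in\F_q^*$ Lemmas~\ref{gauss} and~\ref{lem:A} give $g_n(u)=\chi_2(u)\,\varepsilon\,q^{n/2}$, where $(\varepsilon,n/2)=(\varepsilon_1,1)$ when $n=2$ and $(\varepsilon,n/2)=(\varepsilon_2,n/2)$ when $n$ is odd. This leaves a character sum $\sum_{u\in\F_q^*}\bar\psi(u\beta)\chi_2(u)$ over the ground field.

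The first genuinely new ingredient is to identify the restriction to $\F_q^*$ of the quadratic character $\chi_2$ of $\F_{q^n}$. Writing $\F_q^*=\langle\gamma\rangle$ with $\gamma=\theta^{(q^n-1)/(q-1)}$ for a primitive $\theta\in\F_{q^n}$, one computes $\chi_2(\gamma)=(-1)^{(q^n-1)/(q-1)}=(-1)^n$, since $(q^n-1)/(q-1)=1+q+\cdots+q^{n-1}$ is a sum of $n$ odd terms. Hence $\chi_2|_{\F_q^*}$ is trivial when $n=2$, and equals the quadratic character $\eta$ of $\F_q$ when $n$ is odd (being a nontrivial order-two character of the cyclic group $\F_q^*$).

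With this in hand the outstanding sums are standard. When $n=2$ (so $\chi_2|_{\F_q^*}\equiv1$), or when $\beta=0$, orthogonality gives $\sum_{u\in\F_q^*}\bar\psi(u\beta)=-1$ for $\beta\neq0$ and $\sum_{u\in\F_q^*}\eta(u)=0$, which yield \eqref{eq:M1} and the case $\beta=0$ of \eqref{eq:M2} at once. The delicate case, and the main obstacle, is $n$ odd with $\beta\neq0$: after the substitution $v=u\beta$ the sum becomes $\eta(\beta)\,\overline{g_1}$, where $g_1=\sum_{v\in\F_q^*}\eta(v)\psi(v)$ is the quadratic Gauss sum over $\F_q$, whose exact value I must pin down. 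Here I would use that $(q,n)$ even with $n$ odd forces $q\equiv1\pmod4$, so $\eta(-1)=1$, whence $g_1^2=q$ and $g_1=\pm\sqrt q$ is real; matching the sign against the definition of $\varepsilon_2$ gives $g_1=\varepsilon_2\sqrt q$. This sign identity is precisely what the explicit quadratic Gauss-sum evaluation underlying Lemma~\ref{lem:A} supplies, and it also follows from the Davenport--Hasse relation $g_n(1)=g_1^{\,n}$ for odd $n$ combined with Lemma~\ref{lem:A}.

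Substituting $\overline{g_1}=g_1=\varepsilon_2\sqrt q$ and using $\varepsilon_2^2=1$ then collapses the expression to
\[
N_\beta=q^{n-1}+\frac{\varepsilon_2 q^{n/2}}{q}\cdot\eta(\beta)\varepsilon_2\sqrt q=q^{n-1}+\eta(\beta)\,q^{(n-1)/2},
\]
and dividing by $2$ (with $\delta=0$ since $\beta\neq0$) delivers the nonzero case of \eqref{eq:M2}, completing the proof.
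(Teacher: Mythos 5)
Your argument is correct and follows essentially the same route as the paper: both insert the additive-character expansion of $t_\beta$, reduce the inner sum over $\F_{q^n}$ to the quadratic Gauss sum $g_n(u)=\chi_2(u)g_n(1)$ via Lemmas~\ref{gauss} and~\ref{lem:A}, and then evaluate the remaining ground-field sum as a quadratic Gauss sum over $\F_q$ governed by the same sign constant $\varepsilon_1$ or $\varepsilon_2$. The only cosmetic difference is that you identify the restriction $\chi_2|_{\F_q^*}$ as trivial (for $n=2$) or as $\eta$ (for $n$ odd) and pin down the sign of $g_1$ via Davenport--Hasse, whereas the paper partitions $\F_q^*$ into squares and nonsquares (each counted twice) and applies Lemma~\ref{lem:A} with $n=1$; these amount to the same computation.
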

       \begin{proof}
     Observe that, by  \eqref{chartrace},
     \[2 \M_\beta = \frac{1}{q} \sum _{u \in \F_q}\bar\psi(u \beta)\X_u =: \frac{1}{q}\sum_{u \in \F_q} \bar\psi(u\beta)\sum_{\xi \in \Fn^*}\tilde\psi(u\xi^2),\]
     because each of $\pm \xi \in  \Fn^*$ yields the same square $\xi^2 \in \Fn^*$.  Hence
    \[2q\M_\beta=\sum_{u \in \F_q}\X_u= q^n-1 + \sum_{u\in\F_q^*} \bar \psi(u \beta)\X_u,  \]
    where  $\X_u=\sum_{\xi\in \Fn^*}\tilde \psi(u\xi^2)$.
    
    Suppose $n=2$  and $\beta \neq 0 \in \F_q$..   Then every $u \in \F_q^*$ is a square in $\F_{q^2}$ so that, by Lemma~\ref{lem:A}, ,  $\X_u=g_2(u) -1 =\varepsilon_1 q-1$.  Hence,
  
  \[2q\M_\beta= q^2-1+(\varepsilon_1 q-1) \sum_{u \in \F_q^*}\bar \psi(u\beta)=q^2-1-(\varepsilon_1 q-1), \]
   since $\sum_{u \in \F_q}\bar\psi(u\beta) =0$.
   Consequently, \eqref{eq:M1} holds
   
    Now, suppose, $n$ is an odd prime so that nonsquares in $\F_q$ remain nonsquare in $\F_{q^n}$. Thus, given a fixed nonsquare $c \in \F_q$, the elements of $\F_q^*$ can be written as a disjoint union $\{u^2: u \in \F_q^*\} \cup \{cu^2: u \in \F_q^*\}$, where each member of $\F_q^*$ appears twice. 
    
    This time, by Lemma~\ref{gauss}, if $ \beta =0$, then 
       \[2q\M_0= q^n-1 +\frac{1}{2}\sum_{u \in \F_q} (\X_{u^2}(\chi_1)+\X_{cu^2}(\chi_1) )=q^n-1-(q-1)= q^n-q  \]
    and this case of  \eqref{eq:M2} follows.  Finally suppose $\beta \neq 0$. We have
   \begin{align}
  2 q\M_{\beta} -(q^n-1)  = & \frac{1}{2}\left(\sum_{u\in \F_q^*} \bar\psi(\beta u^2)\X_{u^2}+\bar\psi(c\beta u^2)\X_{cu^2}\right) \nonumber\\
 = & \frac{1}{2}\left(\sum_{u\in \F_q^*} \bar\psi(\beta u^2)\X_1+\bar\psi(c\beta u^2)\X_c\right)  \label{psiX}  \\
= & \frac{1}{2}\{(\chi_2(\beta)\varepsilon_2 q^{1/2}-1)(\varepsilon_2 q^{n/2}-1)+ \nonumber \\ & (\chi_2(c\beta)\varepsilon_2 q^{1/2}-1)(\chi_2(c)\varepsilon_2 q^{n/2}-1)\} \nonumber \\
= & \chi_2(\beta)  q^{(n+1)/2}  +1, \nonumber
   \end{align}
 where,  at \eqref{psiX},  Lemma~\ref{lem:A} with $n=1$   and $\varepsilon_2$ given by \eqref{eps2} is applied to the sums  over $u \in \F_q^*$ in addition to the sums over
    $\xi \in \F_{q^n}^*$ and we note that $\varepsilon_2^2=1$.
   This yields \eqref{eq:M2} more generally.
       \end{proof}

\section {Mixed character sums}\label{sec:sums}
Assume throughout that $q$ is an odd prime and $n$ is prime with $(q,n)$ even  (although this is not essential for much of the discussion). 
We begin by introducing the notion of freeness. Let $m\mid q^n-1$. An element $\xi \in \F_{q^n}^*$ is \emph{$m$-free} if $\xi = \zeta^d$ for some $d\mid m$ and $\zeta\in\F_{q^n}^*$ implies $d=1$. It is clear that primitive elements are exactly those that are $q_0$-free, where $q_0$ is the square-free part of $q^n-1$. It is also evident that there is some relation between $m$-freeness and multiplicative order.
\begin{lemma}[\cite{huczynskamullenpanariothomson13}*{Proposition~5.3}]\label{lemma:m-free}
If $m\mid q^n-1$ then $\xi\in\F_{q^n}^*$ is $m$-free if and only if $\gcd\left( m,\frac{q^n-1}{\ord(\xi)} \right)=1$.
\end{lemma}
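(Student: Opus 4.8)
The plan is to exploit the cyclic structure of $\F_{q^n}^*$ and translate the definition of $m$-freeness into a single divisibility condition on the exponent of $\xi$ relative to a fixed generator. First I would fix a generator $g$ of $\F_{q^n}^*$ and write $\xi = g^k$, where $N := q^n-1$ and $0 \le k < N$. Recalling the standard fact that $\ord(\xi) = N/\gcd(k,N)$, I set $e := \ord(\xi)$, so that $\gcd(k,N) = N/e$; this is precisely the quantity out of which the gcd in the statement is built.

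The key step is to characterise, for a divisor $d \mid m$, exactly when $\xi$ is a $d$-th power in $\F_{q^n}^*$. Writing a candidate root as $\zeta = g^j$, the equation $\xi = \zeta^d$ becomes $dj \equiv k \pmod{N}$, which is solvable in $j$ if and only if $\gcd(d,N) \mid k$. Since $d \mid m \mid N$ we have $\gcd(d,N) = d$, so $\xi$ is a $d$-th power if and only if $d \mid k$; and because $d \mid N$ this is in turn equivalent to $d \mid \gcd(k,N) = N/e$.

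With this equivalence in hand the lemma follows by unwinding the definition. By definition $\xi$ fails to be $m$-free exactly when there is some divisor $d \mid m$ with $d > 1$ for which $\xi$ is a $d$-th power, i.e.\ (by the previous paragraph) for which $d \mid N/e$. Such a $d>1$ exists if and only if $m$ and $N/e$ share a common factor exceeding $1$, that is, if and only if $\gcd(m, N/e) > 1$. Negating, $\xi$ is $m$-free if and only if $\gcd\bigl(m, (q^n-1)/\ord(\xi)\bigr) = 1$, as claimed.

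I expect no serious obstacle here, since the content is entirely elementary cyclic-group arithmetic. The one point deserving care is the bookkeeping inside the definition: one must check that letting $d$ range over \emph{all} divisors of $m$ (rather than just its prime divisors) costs nothing, since a factorisation $\xi=\zeta^d$ with a prime $\ell \mid d$ yields $\xi = (\zeta^{d/\ell})^{\ell}$, and one must use $d \mid N$ repeatedly to collapse $\gcd(d,N)$ to $d$. These are exactly the places where the hypothesis $m \mid q^n-1$ is invoked.
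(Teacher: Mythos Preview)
Your proof is correct. The paper itself does not prove this lemma; it merely cites it as Proposition~5.3 of \cite{huczynskamullenpanariothomson13} and moves on. Your argument via a fixed generator and the solvability criterion for linear congruences is the standard elementary verification and matches exactly what the cited reference does. One minor remark: your closing caveat about divisors versus prime divisors of $m$ is unnecessary here, since the paper's definition of $m$-free already quantifies over all divisors $d\mid m$, so no reduction to primes is needed.
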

We introduced the quadratic character of $\Fn$  in Lemma~\ref{gauss}.     More generally, the multiplicative characters of $\Fn^*$  form a group isomorphic to $\Fn^*$.   Each character $\chi$  has order of degree $d$, a divisor of $q^n-1$, and $\chi(0)=0$, by defiinition.    In particular, we denote by $\chi_1$ the trivial multiplicative character  and by $\eta = \chi_2$ the quadratic character.

Vinogradov's formula yields an expression of the characteristic function of $m$-free elements in terms of multiplicative characters, namely:
\[
\omega_m(\xi) := \theta(m) \sum_{d\mid m} \frac{\mu(d)}{\phi(d)} \sum_{\ord(\chi_d) = d} \chi_d(\xi) ,
\]
where $\mu$ stands for the M\"{o}bius function and  $\theta(m) := \phi(m)/m$.  Also, here the inner sum suns through multiplicative characters $\chi_d$  of order $d$.

\begin{proposition}\label{propo:hybrid}  Let $q$ be an odd prime and  $\chi_d$ be a multiplicative character of order $d$, $u\in\F_q$.   Set
\[
A := \sum_{\xi\in\F_{q^n}} \chi_d(\xi) \tilde\psi(u\xi^2) .
\]
\begin{enumerate}
\item If $d=1$ and $u=0$, then $A = q^n-1$.
\item If $d=1$ and $u\neq 0$, then $|A| \leq  q^{n/2}+1$.
\item If $d\neq 1$ and $u=0$, then $A = 0$.
\item If $d\neq 1$ and $u\neq 0$, then $|A|\leq 2 q^{n/2}$.
\end{enumerate}
\end{proposition}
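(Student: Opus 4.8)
The plan is to dispose of the two cases with $u=0$ by inspection and to reduce the two cases with $u\neq 0$ to a single short computation with Gauss sums over $\F_{q^n}$. For the trivial cases, when $u=0$ we have $\tilde\psi(0)=1$, so $A=\sum_{\xi\in\F_{q^n}}\chi_d(\xi)$; recalling the convention $\chi_d(0)=0$, this equals $\sum_{\xi\in\F_{q^n}^*}\chi_d(\xi)$, which is $q^n-1$ when $d=1$ and vanishes by orthogonality when $d\neq 1$. This settles (1) and (3) at once.

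For $u\neq 0$ the key step is to expand the additive character via Gauss-sum inversion. Write $G(\chi):=\sum_{\xi\in\F_{q^n}^*}\chi(\xi)\tilde\psi(\xi)$ for the Gauss sum attached to a multiplicative character $\chi$ of $\F_{q^n}^*$. Since $\chi_d(0)=0$, I restrict the defining sum to $\xi\in\F_{q^n}^*$, where $u\xi^2\neq 0$, and substitute $\tilde\psi(u\xi^2)=\frac{1}{q^n-1}\sum_{\chi}\bar\chi(u\xi^2)G(\chi)$. Interchanging the order of summation gives
\[
A=\frac{1}{q^n-1}\sum_{\chi}\bar\chi(u)\,G(\chi)\sum_{\xi\in\F_{q^n}^*}\chi_d(\xi)\bar\chi(\xi^2),
\]
and the inner sum equals $q^n-1$ when $\chi^2=\chi_d$ and $0$ otherwise, so that
\[
A=\sum_{\chi\,:\,\chi^2=\chi_d}\bar\chi(u)\,G(\chi).
\]
Because $(q,n)$ is even, the character group of $\F_{q^n}^*$ is cyclic of order $q^n-1$ divisible by $4$; hence $\chi^2=\chi_d$ has either no solution or exactly two, the two differing by the quadratic character $\eta$.

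Finally I would read off the bounds using the standard value $|G(\chi)|=q^{n/2}$ for every nontrivial $\chi$. If $\chi^2=\chi_d$ has no solution then $A=0$ and both (2) and (4) hold trivially. If there are two solutions $\chi'$ and $\chi''=\eta\chi'$, then in case (4), where $d\neq1$, neither can be trivial, since a trivial solution would force $\chi_d=\chi_1$; thus both Gauss sums have modulus $q^{n/2}$ and the triangle inequality yields $|A|\leq 2q^{n/2}$. In case (2), where $d=1$, the two solutions are $\chi_1$ and $\eta$, and since $G(\chi_1)=-1$ while $|G(\eta)|=q^{n/2}$ by Lemmas~\ref{gauss} and \ref{lem:A}, we obtain $|A|\leq q^{n/2}+1$; alternatively, for $d=1$ one can bypass the decomposition entirely and note $A=g_n(u)-1$, invoking Lemma~\ref{lem:A} directly. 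The only genuine content is the identity $A=\sum_{\chi^2=\chi_d}\bar\chi(u)G(\chi)$, after which the estimates are immediate, so the step to get right is the Gauss-sum inversion together with the orthogonality bookkeeping that collapses the character sum. For case (4) one could instead quote Weil's bound for the mixed sum $\sum_{\xi}\chi_d(\xi)\tilde\psi(u\xi^2)$, which also gives $2q^{n/2}$, but the Gauss-sum route is self-contained given what the paper has already developed.
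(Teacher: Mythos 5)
Your proof is correct, and for the main case it takes a genuinely different route from the paper. The paper handles $d=1$ exactly as in your alternative remark, writing $A=g_n(u)-1$ and invoking Lemma~\ref{lem:A}; it gets case (3) from orthogonality; but for case (4) it simply cites the Weil-type estimate of \cite{schmidt76}*{Theorem~2G} for the mixed sum. You instead expand $\tilde\psi(u\xi^2)$ by Gauss-sum inversion and collapse the character sum by orthogonality to obtain the exact identity $A=\sum_{\chi\colon\chi^2=\chi_d}\bar\chi(u)G(\chi)$, from which all four bounds follow at once (the solution set of $\chi^2=\chi_d$ being empty or a coset of $\{\chi_1,\eta\}$, since $q$ is odd — note that only $2\mid q^n-1$ is needed here, not the evenness of $(q,n)$, so your parenthetical appeal to $4\mid q^n-1$ is superfluous but harmless). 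Your approach buys self-containedness — it uses only the Gauss-sum facts the paper has already set up in Lemmas~\ref{gauss} and~\ref{lem:A} rather than an external Weil bound — and it yields strictly more information, namely an exact evaluation of $A$ as a sum of at most two Gauss sums, which in particular shows when the bound $2q^{n/2}$ can be attained. What the paper's citation buys is brevity and a statement valid without any structural analysis of the squaring map on the character group. Both arguments are sound.
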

\begin{proof}
When $d=1$, since $\chi_1(0)=0$, we have that $A= g_n(u) -1$. From this, we immediately obtain the first and second items. The third item is a consequence of the orthogonality relations and the last item is implied by \cite{schmidt76}*{Theorem~2G}.
\end{proof}

Finally, the following is an improvement of the main result of \cite{katz89}, in the case $n=2$,
see \cite{cohen10}*{Lemma~3.3}.
\begin{lemma}\label{lem:katz}
Let $\theta\in\F_{q^2}$ be such that $\F_{q^2} = \F_q(\theta)$ and $\chi$ a non-trivial character. Set
\[ B := \sum_{\alpha\in\F_q} \chi (\theta+\alpha) . \]
\begin{enumerate}
\item If $\ord(\chi)\nmid q+1$, then $|B| = \sqrt{q}$.
\item If $\ord(\chi)\mid q+1$, then $B = -1$.
\end{enumerate}
\end{lemma}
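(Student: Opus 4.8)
The plan is to translate the dichotomy on $\ord(\chi)$ into a statement about the restriction of $\chi$ to $\F_q^*$. Since $\F_{q^2}^*$ is cyclic of order $(q-1)(q+1)$ and $\F_q^*$ is its unique subgroup of order $q-1$ (the set of $(q+1)$-th powers), a character $\chi$ satisfies $\ord(\chi)\mid q+1$ if and only if $\chi$ is trivial on $\F_q^*$; equivalently, $\ord(\chi)\nmid q+1$ precisely when $\chi|_{\F_q^*}$ is non-trivial, so that $\sum_{\alpha\in\F_q^*}\chi(\alpha)=0$. I would record first that every summand is non-zero: $\theta+\alpha=0$ would force $\theta=-\alpha\in\F_q$, contradicting $\F_{q^2}=\F_q(\theta)$.

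For the case $\ord(\chi)\mid q+1$ (item~(2)), the idea is that $\chi$ descends to a character $\bar\chi$ of the quotient $\F_{q^2}^*/\F_q^*$, which is cyclic of order $q+1$ and on which $\bar\chi$ is non-trivial. The $q+1$ cosets are exactly the punctured $\F_q$-lines of $\F_{q^2}$; taking the basis $\{1,\theta\}$ one checks that they are represented once each by $1$ together with the $q$ elements $\theta+\alpha$, $\alpha\in\F_q$. Orthogonality gives that the sum of $\bar\chi$ over all $q+1$ cosets equals $0$, and since the coset of $1$ contributes $\bar\chi(\F_q^*)=1$, the remaining $q$ cosets contribute $-1$; as $\chi(\theta+\alpha)=\bar\chi((\theta+\alpha)\F_q^*)$ this is exactly $B=-1$.

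For the case $\ord(\chi)\nmid q+1$ (item~(1)), rather than invoke the Weil bound I would compute the second moment exactly. Writing $|B|^2=\sum_{\alpha,\beta\in\F_q}\chi\!\left(\frac{\theta+\alpha}{\theta+\beta}\right)$, the key step is to understand the ratio $r=(\theta+\alpha)/(\theta+\beta)$. A short calculation using $\theta^q\neq\theta$ shows $r\in\F_q$ if and only if $\alpha=\beta$ (whence $r=1$), so the diagonal contributes $q$. The main point is to show that, as $(\alpha,\beta)$ runs over the off-diagonal pairs, $r$ ranges over $\F_{q^2}\setminus\F_q$ bijectively: given $r=r_0+r_1\theta\notin\F_q$ (so $r_1\neq0$), solving $r(\theta+\beta)\in\theta+\F_q$ for $\beta$ by means of the minimal polynomial of $\theta$ yields a unique $\beta\in\F_q$, hence a unique pair. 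Consequently $|B|^2=q+\sum_{r\in\F_{q^2}\setminus\F_q}\chi(r)$, and writing $\sum_{r\in\F_{q^2}\setminus\F_q}\chi(r)=\sum_{r\in\F_{q^2}^*}\chi(r)-\sum_{r\in\F_q^*}\chi(r)$, both sums vanish (the first because $\chi$ is non-trivial, the second by the reformulation above). Thus $|B|^2=q$ and $|B|=\sqrt q$.

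The main obstacle is the bijective counting in item~(1): pinning down that each $r\notin\F_q$ is hit exactly once. Everything else reduces to orthogonality and the elementary description of the $\F_q$-lines in $\F_{q^2}$.
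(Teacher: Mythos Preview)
The paper does not actually supply a proof of this lemma: it simply cites it as an improvement (for $n=2$) of Katz's estimate, referring to \cite{katz89} and \cite{cohen10}*{Lemma~3.3}. Your argument, by contrast, is a clean self-contained proof, and it is correct.

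A couple of remarks on the details. Your reduction at the outset is the key step: $\ord(\chi)\mid q+1$ if and only if $\chi$ is trivial on $\F_q^*$, because $\F_q^*$ is precisely the subgroup of $(q+1)$-th powers in the cyclic group $\F_{q^2}^*$. For item~(2), your coset argument is fine; one should perhaps note explicitly that the $q$ elements $\theta+\alpha$ lie in pairwise distinct cosets of $\F_q^*$ (equivalently $(\theta+\alpha)/(\theta+\beta)\in\F_q$ forces $\alpha=\beta$, which you in fact prove in item~(1)), so that together with the coset of $1$ they exhaust all $q+1$ cosets. For item~(1), the bijection you set up is correct: writing $r=r_0+r_1\theta$ with $r_1\neq 0$ and using $\theta^2=a\theta+b$, the equation $r(\theta+\beta)\in\theta+\F_q$ forces the $\theta$-coefficient $r_0+r_1a+\beta r_1$ to equal $1$, giving a unique $\beta\in\F_q$ and hence a unique $\alpha$; the cardinalities $q^2-q$ on both sides then match. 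The final evaluation $|B|^2=q$ by orthogonality is immediate.

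So whereas the paper relies on the cited literature (Katz's general bound yields only $|B|\le\sqrt q$ in item~(1); the equality and the exact value in item~(2) are the refinements quoted from \cite{cohen10}), your approach recovers both statements by an elementary second-moment computation and a coset count, with no appeal to Weil-type estimates. This is a genuinely more elementary route than the one the paper defers to.
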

\section{Conditions for even pairs}\label{sec:conditions}
Recall that $2$-primitive elements are exactly the squares of primitive elements. In other words, we are looking for a primitive element the trace of whose square is fixed to some $\beta\in\F_q$. With that in mind, following the analysis of Section~\ref{sec:sums}, we define the following
\[
\N_\beta(m) := \sum_{\xi\in\F_{q^n}} \omega_m(\xi) t_\beta (\xi^2) ,
\]
where $m\mid q^n-1$. In particular, our aim is to prove that  $\N_\beta(q_0)\neq 0$  (where we recall that $q_0$ stands for the square-free part of $q^n-1$) and note that, in fact, since $(q,n)$ is even, $\N_\beta(q_0)$ counts twice the number of $2$-primitive elements with trace $\beta$. Next, we compute:
\begin{align}\
\frac{\N_\beta(m)}{\theta(m)} & =  \frac 1q \sum_{\xi\in\F_{q^n}} \sum_{d \mid m} \frac{\mu(d)}{\phi(d)} \sum_{\ord(\chi_d)=d} \chi_d(\xi) \sum_{u\in\F_q} \bar\psi(u\beta) \tilde\psi(u\xi^2) \nonumber \\
 & = \frac 1q  \sum_{d \mid m} \frac{\mu(d)}{\phi(d)} \sum_{\ord(\chi)=d} \sum_{u\in\F_q}\bar\psi(u\beta) \X_u (\chi_d) , \label{eq:N1} 
\end{align}
where
\[ 
\X_u (\chi_d) :=  \sum_{\xi\in\F_{q^n}} \chi_d(\xi) \tilde\psi(u\xi^2) .
\] 

  Now observe that the terms on the right side of \eqref{eq:N1} corresponding to $d=1$ are given precisely by $2\M_\beta$ as evaluated in  Proposition~\ref{m=1} and note that this is where the discrepancy occurs in \cite{cohenkapetanakis20}, equation (4.5).
  
  We proceed to  consider the contribution of the terms  on the right side of \eqref{eq:N1} with $d>1$: call this quantity $R_\beta(m)$. The argument echoes that of the proof of Lemma~\ref{m=1} without the precision of the latter.
  
    By Proposition~\ref{propo:hybrid}(3) we can suppose $u\neq0$.    Observe that
  \begin{align*}
  \sum_{u \in \F_q^*}\bar\psi(u\beta)\X_u(\chi_d)& =\frac{1}{2}\left(\sum_{u \in \F_q^*}\bar\psi(u^2\beta)\X_{u^2}(\chi_d) +\sum_{u \in \F_q^*}\bar\psi(cu^2\beta)\X_{cu^2}(\chi_d)\right) \\
   &=\frac{1}{2}\left(\sum_{u \in \F_q^*} \bar\chi_d(u)\bar\psi(\beta u^2)\X_1(\chi_d) +\sum_{u \in \F_q^*}\bar \chi_d(u)\bar\psi(c\beta u^2)\X_c(\chi_d)\right) \\    
  \end{align*}

 Thus,  again  writing $\F_q^*$ as a disjoint union of squares and nonsquares (each counted twice), we have  
\begin{equation}\label{C}
R_\beta(m)= \frac{1}{2q}\sum_{ \substack{d|m \\ d>1}}\frac{\mu(d)}{\phi(d)}\sum_{\ord (\chi_d)=d}\sum_{u \in \F_q^*}\big(\bar{\psi}(u^2\beta)\bar\chi_d(u)\X_1(\chi_d)+\bar{\psi}(u^2c\beta)\bar\chi_d(u)\X_{c}(\chi_d)\big),
\end{equation}

To proceed  we distinguish between the cases   $\beta\neq 0$ and  $\beta=0$.
\subsection{The case  $\beta\neq 0$ and $n$ prime.}
In this situation (\ref{C}) can be rewritten as follows.
\begin{lemma}\label{D}
Assume $(q,n)$ is even and $\beta(\neq  0) \in \F_q$.  Then
 \[ 
 R_\beta(m)=\frac{1}{2q}\sum_{ \substack {d\mid m \\ d>1}}\frac{\mu(d)}{\phi(d)}\sum_{\ord (\chi_d)=d}\big(\overline{X_{\beta}(\chi_d)}\X_1(\chi_d)+\overline{X_{c\beta}(\chi_d)}\X_c(\chi_d)\big),
  \] 
 where $X_{\beta}(\chi_d)$ (with $\chi_d$ restricted to $\F_q$) is the sum $\sum_{u \in \F_q} \chi_d(u)\psi(u^2 \beta)$ (i.e., the sum $\X_{\beta}(\chi_d)$ over $\F_q$ rather than $\F_{q^n}$).
 \end{lemma}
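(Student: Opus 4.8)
The plan is to obtain the claimed form directly from \eqref{C} by isolating the dependence on $u$. First I would fix a divisor $d>1$ of $m$ and a character $\chi_d$ of order $d$, and focus on the innermost sum
\[
\sum_{u\in\F_q^*}\big(\bar\psi(u^2\beta)\bar\chi_d(u)\X_1(\chi_d)+\bar\psi(u^2c\beta)\bar\chi_d(u)\X_c(\chi_d)\big).
\]
The quantities $\X_1(\chi_d)$ and $\X_c(\chi_d)$ are sums over $\xi\in\F_{q^n}$ and hence do not depend on $u$, so they may be pulled outside the $u$-sum, leaving
\[
\X_1(\chi_d)\sum_{u\in\F_q^*}\bar\chi_d(u)\bar\psi(u^2\beta)+\X_c(\chi_d)\sum_{u\in\F_q^*}\bar\chi_d(u)\bar\psi(u^2c\beta).
\]

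Next I would recognise each leftover $u$-sum as the complex conjugate of one of the $\F_q$-sums appearing in the statement. Since $\chi_d(0)=0$, adjoining the term $u=0$ changes nothing, so the first sum equals $\sum_{u\in\F_q}\bar\chi_d(u)\bar\psi(u^2\beta)$. Because $\bar\chi_d=\overline{\chi_d}$ and $\bar\psi=\overline{\psi}$, and since $u^2\beta\in\F_q$ so that the relevant additive character is the canonical character of $\F_q$, this is precisely $\overline{\sum_{u\in\F_q}\chi_d(u)\psi(u^2\beta)}=\overline{X_\beta(\chi_d)}$, with $\chi_d$ understood as restricted to $\F_q$. The same computation, with $\beta$ replaced by $c\beta$, identifies the second sum as $\overline{X_{c\beta}(\chi_d)}$. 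Substituting these back and restoring the outer sums over $d\mid m$ (with $d>1$) and over $\chi_d$ of order $d$, together with the factor $\frac{1}{2q}\frac{\mu(d)}{\phi(d)}$, yields the asserted formula.

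There is no serious obstacle here: the argument is a bookkeeping manipulation, and the only point requiring care is the interpretation of $\chi_d$ in the two different fields. In $X_\beta(\chi_d)$ the character $\chi_d$, a priori a character of $\F_{q^n}^*$, must be read as its restriction to $\F_q^*\subset\F_{q^n}^*$, and one should confirm throughout that the additive argument $u^2\beta$ lies in $\F_q$, so that $\psi$ rather than its lift $\tilde\psi$ is the correct character. Once these identifications are fixed, the factoring is immediate and the lemma follows.
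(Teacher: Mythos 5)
Your proposal is correct and matches the paper's (implicit) argument: the paper presents Lemma~\ref{D} as an immediate rewriting of \eqref{C}, obtained exactly as you describe by pulling the $u$-independent factors $\X_1(\chi_d)$, $\X_c(\chi_d)$ out of the inner sum and recognising $\sum_{u\in\F_q^*}\bar\chi_d(u)\bar\psi(u^2\beta)$ as $\overline{X_\beta(\chi_d)}$ (using $\chi_d(0)=0$ to extend the sum to all of $\F_q$). Your care over the restriction of $\chi_d$ to $\F_q^*$ and over $\psi$ versus $\tilde\psi$ is exactly the right point to check, and nothing is missing.
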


Next, we present a lower bound for  ${\mathcal N}_\beta(m)$ which yields a condition for it to be positive.   A key point is that  in sums  over divisors $d$ of an integer $m$ with a factor $\mu(d)$ effectively  involve  only {\em square-free} divisors $d$ and we designate the number of such divisors by $W(m) = 2^{\nu(m)}$, where $\nu(m)$ is the number of distinct prime disisors of $m$,

\begin{theorem}\label{mainbound}
\label{H}
 Assume $(q,n)$ is even, where $q$ is an odd prime power and $n$ is a prime. Let  $\beta \in \F_q^*$ and  $m$ be an even divisor of $q^n-1$ with $m_Q$ be the product of those primes in $m$ which divide $Q= \frac{q^n-1}{q-1}$.  Then
  \begin{equation}\label{I}
  {\mathcal N}_\beta(m) \geq  \theta(m)q^{\frac{n-1}{2}} \left\{q^{\frac{n-1}{2}} - 4W(m)+2W(m_Q)+1\right\}.
  \end{equation}
  Hence $\N_\beta(q^n-1)$ is positive whenever
  \begin{equation}\label{eq:main}
  q^{\frac{n-1}{2}}>4W(q^n-1)-2W(Q)-1.
  \end{equation}
\end{theorem}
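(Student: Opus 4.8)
The plan is to establish the lower bound \eqref{I} by combining the exact evaluation of the $d=1$ contribution from Proposition~\ref{m=1} with a uniform upper bound on the remaining terms $R_\beta(m)$, and then to deduce the positivity criterion \eqref{eq:main} as a direct corollary by specializing to $m = q^n - 1$. First I would write $\N_\beta(m)/\theta(m) = 2\M_\beta + R_\beta(m)$, using the observation made just after \eqref{eq:N1} that the $d=1$ terms reproduce $2\M_\beta$ exactly. Since $\beta \neq 0$, Proposition~\ref{m=1} gives $2\M_\beta = q^{n-1} + \eta(\beta)q^{(n-1)/2}$ in the odd-prime case (and the analogous $n=2$ value), so the main term is at worst $q^{n-1} - q^{(n-1)/2} = q^{(n-1)/2}\bigl(q^{(n-1)/2} - 1\bigr)$. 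This is precisely where the stated bound's leading factor $q^{(n-1)/2}\{q^{(n-1)/2} + 1\}$ comes from once the error terms are folded in, and it is worth emphasizing that the honest evaluation of $2\M_\beta$ is exactly the correction the paper advertises over \cite{cohenkapetanakis20}.

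Next I would bound $|R_\beta(m)|$ using the expression in Lemma~\ref{D}, namely
\[
R_\beta(m)=\frac{1}{2q}\sum_{\substack{d\mid m \\ d>1}}\frac{\mu(d)}{\phi(d)}\sum_{\ord(\chi_d)=d}\big(\overline{X_\beta(\chi_d)}\,\X_1(\chi_d)+\overline{X_{c\beta}(\chi_d)}\,\X_c(\chi_d)\big).
\]
The strategy is to apply the Weil-type estimates already assembled: Proposition~\ref{propo:hybrid}(4) gives $|\X_1(\chi_d)|, |\X_c(\chi_d)| \leq 2q^{n/2}$ for $d>1$, while the inner factors $X_\beta(\chi_d), X_{c\beta}(\chi_d)$ are Gauss/Weil sums over $\F_q$ that are bounded by $\sqrt{q}$ (this is the role of Lemma~\ref{lem:katz} and the $n=1$ instance of Proposition~\ref{propo:hybrid}). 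Combining these yields a per-character bound of roughly $2 q^{1/2}\cdot 2 q^{n/2} = 4q^{(n+1)/2}$ for each of the two products, so after the factor $1/(2q)$ each character contributes at most $\approx 2q^{(n-1)/2}$ in absolute value. Summing over the $\phi(d)$ characters of each order $d$ (which cancels the $1/\phi(d)$) and then over the square-free $d>1$ dividing $m$, the number of such $d$ is $W(m)-1$, giving a crude bound on the order of $4W(m)\,q^{(n-1)/2}$.

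The main obstacle—and the reason the bound \eqref{I} is sharper than this crude estimate—is the careful bookkeeping of the characters $\chi_d$ whose order divides $q+1$ (when $n=2$) or, more generally, those restricting trivially to $\F_q$: for these the factor $X_\beta(\chi_d)$ collapses to the value $-1$ rather than $\sqrt{q}$ (Lemma~\ref{lem:katz}(2)), which is exactly what produces the $+2W(m_Q)$ savings term, with $m_Q$ built from primes dividing $Q = (q^n-1)/(q-1)$. I would therefore split the sum over $d>1$ according to whether the prime divisors of $d$ divide $Q$ or not, track which characters give the weaker $\sqrt{q}$ factor versus the trivial $-1$ contribution, and assemble the constants so that the $-4W(m)$, $+2W(m_Q)$, and $+1$ terms emerge precisely. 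Finally, taking $m = q^n-1$ (so $m_Q$ corresponds to $W(Q)$) and demanding that the braced quantity in \eqref{I} be positive gives exactly $q^{(n-1)/2} > 4W(q^n-1) - 2W(Q) - 1$, which is \eqref{eq:main}; since $\N_\beta(q^n-1)$ counts twice the number of $2$-primitive elements of trace $\beta$, positivity of this quantity establishes existence. The delicate part will be verifying that the constant in front of $W(m_Q)$ is exactly $+2$ and not merely bounded below by it, which requires the Gauss-sum value $|X_\beta(\chi_d)| = \sqrt q$ to be exact (not just an inequality) for the relevant characters.
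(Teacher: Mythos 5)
Your overall skeleton matches the paper's proof: split $\N_\beta(m)/\theta(m)$ into $2\M_\beta + R_\beta(m)$, use Proposition~\ref{m=1} to get $2\M_\beta \geq q^{n-1} - q^{(n-1)/2}$, bound $R_\beta(m)$ term by term via Lemma~\ref{D} and Proposition~\ref{propo:hybrid}, and separate the square-free $d>1$ according to whether $d \mid m_Q$. However, the step that actually produces the $+2W(m_Q)$ savings is wrong as you describe it. You claim that for $\chi_d$ restricting trivially to $\F_q$ the factor $X_\beta(\chi_d)$ ``collapses to the value $-1$'' and you cite Lemma~\ref{lem:katz}(2). That lemma concerns the sum $\sum_{\alpha\in\F_q}\chi(\theta+\alpha)$ over a quadratic extension and is only used in the $n=2$ analysis of Section~\ref{sec:quadratics}; it says nothing about $X_\beta(\chi_d)=\sum_{u\in\F_q}\chi_d(u)\psi(u^2\beta)$. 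When $d\mid m_Q$ the restriction of $\chi_d$ to $\F_q$ is trivial, so $X_\beta(\chi_d)=\sum_{u\in\F_q^*}\psi(u^2\beta)=g_1(\beta)-1=\pm q^{1/2}-1$ by Lemma~\ref{lem:A} with $n=1$ --- not $-1$. (If it really were $-1$, those terms would contribute $O(q^{(n-2)/2})$ each and you would obtain a bound strictly stronger than \eqref{I}, which should have been a warning sign.)

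The correct mechanism, which your write-up misses, is a pairing argument: since $c$ is a nonsquare, $X_\beta(\chi_d)$ and $X_{c\beta}(\chi_d)$ take the two values $q^{1/2}-1$ and $-q^{1/2}-1$ in opposite order, so one of the products $\overline{X_\beta(\chi_d)}\X_1(\chi_d)$, $\overline{X_{c\beta}(\chi_d)}\X_c(\chi_d)$ is bounded by $2(q^{1/2}-1)q^{n/2}$ and the other by $2(q^{1/2}+1)q^{n/2}$; their sum is at most $4q^{(n+1)/2}$ instead of the generic $2\cdot 2q^{1/2}\cdot 2q^{n/2}=8q^{(n+1)/2}$. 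After the factor $1/(2q)$ this gives $2q^{(n-1)/2}$ per such $d$ versus $4q^{(n-1)/2}$ for $d\nmid m_Q$, and counting $W(m_Q)-1$ versus $W(m)-W(m_Q)$ divisors yields exactly $-4W(m)+2W(m_Q)+2$, which combined with the $-1$ from the main term gives the $+1$ in \eqref{I} (your attribution of the $+1$ solely to the main term is also off). Your closing worry about needing $|X_\beta(\chi_d)|=\sqrt q$ ``exactly'' is misplaced: \eqref{I} is a lower bound, so only upper bounds on the error terms are needed; what \emph{is} needed is the exact evaluation $\pm q^{1/2}-1$ with opposite signs for $\beta$ and $c\beta$. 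The deduction of \eqref{eq:main} from \eqref{I} with $m=q^n-1$ is fine.
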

\begin{proof}From \eqref{eq:N1} we have 
\[\frac{\N_\beta(m)}{\theta(m)}=2M_\beta+R_\beta(m).\]
Moreover, by Propostion \ref{m=1},  $|2M_\beta - q^{n-1}| \leq q^{\frac{n-1}{2}}$
From Proposition~\ref{propo:hybrid}, for any $\beta \in \F_q^*$ and multiplicative character $\chi$, $|\X_\beta(\chi)| \leq 2 q^{\frac{n}{2}}$ and $|X_\beta(\chi)| \leq 2q^{\frac{1}{2}}$ .
 Indeed when $d\mid m_Q$, then $\chi_d$  restricted to  $\F_q$ is the trivial character and  $|X_\beta(\chi_d)| \leq q^{\frac{1}{2}}+1$.  In fact, we can be more precise about the latter bound (i.e., when $d|m_Q$). 
 For,  whether $n$ is even or odd, by \eqref{eps2},  $X_\beta(\chi_1)$ and $X_{c\beta}(\chi_1)$ take the two  real values $\pm q^{1/2} -1$ in either order. Thus, one of $|\big(\overline{X_{\beta}(\chi_d)}\X_1(\chi_d)|$  and $|\big(\overline{X_{c\beta}(\chi_d)}\X_c(\chi_d)|$ is bounded by $2(q^{1/2} -1)q^{n/2}$ and the other by  $2(q^{1/2} +1)q^{n/2}$. So their sum is bounded absolutely by $4q^{(n+1)/2}$  
 Thus, by  Lemma~\ref{D}, 
\[\frac{ \mathcal{ N}_\beta(m)}{\theta(m)q^{\frac{n-1}{2}}} \geq  q^{\frac{n-1}{2}} -1- 4(W(m)-W(m_Q)) -2(W(m_Q)-1).\]

 and the  result  follows.
   \end{proof}
%

%
%
%
\subsection{The case   $\beta=0$ and $n$ an odd prime.}
Next we suppose that $n$ is an odd prime  and $\beta=0$.
Now (\ref{C}) does not have a Gauss sum factor. We show that, to ensure that ${\mathcal N}_0(q^n-1)$ is positive, it suffices to show
that ${\mathcal N}_0(Q)$ is positive.
\begin{lemma}
\label{L}
Suppose $\xi \in \F_{q^n}$ is $Q$-free. 
Then there exists $c \in \F_q$ with $c\xi \in \F_{q^n}$ primitive. 
If, further, $\Tr(\xi^2) =0$, then $\Tr((c\xi)^2)=0$.
\end{lemma}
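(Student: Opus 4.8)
The plan is to dispatch the two assertions separately, since the second is essentially free and all the content lies in the first. For the trace assertion, I would simply note that $c\in\F_q$ forces $c^2\in\F_q$, and that $\Tr\colon\F_{q^n}\to\F_q$ is $\F_q$-linear, so $\Tr((c\xi)^2)=\Tr(c^2\xi^2)=c^2\,\Tr(\xi^2)$. Hence $\Tr(\xi^2)=0$ yields $\Tr((c\xi)^2)=0$ for \emph{every} admissible $c$; this imposes no further constraint and can be recorded in one line once $c$ has been produced.

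For the existence of a primitive $c\xi$, I would fix a generator $g$ of the cyclic group $\F_{q^n}^*$ and write $\xi=g^a$. Since $\F_q^*$ is the unique subgroup of order $q-1$, it equals $\langle g^{Q}\rangle$, so every $c\in\F_q^*$ has the form $c=g^{Qt}$ and $c\xi=g^{a+Qt}$. By Lemma~\ref{lemma:m-free}, and because $Q\mid q^n-1$, the hypothesis that $\xi$ is $Q$-free translates into $\gcd(a,Q)=1$; likewise $c\xi$ is primitive exactly when $\gcd(a+Qt,\,q^n-1)=1$. Thus the whole problem reduces to producing a residue $t\bmod(q-1)$ for which $a+Qt$ is coprime to $q^n-1$.

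To find such a $t$ I would run through the primes $\ell\mid q^n-1$. If $\ell\mid Q$, then $\gcd(a,Q)=1$ gives $\ell\nmid a$, and since $a+Qt\equiv a\pmod{\ell}$ these primes divide $a+Qt$ for no value of $t$; they are handled automatically by the freeness hypothesis. Every remaining prime divides $q^n-1=Q(q-1)$ but not $Q$, hence divides $q-1$; for such an $\ell$ the element $Q$ is invertible modulo $\ell$, so there is exactly one forbidden residue $t\equiv-aQ^{-1}\pmod{\ell}$. As these primes are distinct and all divide $q-1$, the Chinese Remainder Theorem shows the number of admissible $t$ in a complete residue system modulo $q-1$ equals $(q-1)\prod_{\ell\mid q-1,\ \ell\nmid Q}\!\left(1-\ell^{-1}\right)$, a positive integer, so a suitable $t$ exists and the corresponding $c=g^{Qt}$ makes $c\xi$ primitive.

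The step demanding the most care is the prime-by-prime bookkeeping in the last paragraph: one must verify that the primes dividing $Q$ are precisely those neutralised by $Q$-freeness, and that every other prime factor of $q^n-1$ lies inside $q-1$, so that shifting the exponent $a$ by multiples of $Q$ (equivalently, multiplying $\xi$ by a subfield element) can steer $a+Qt$ clear of each of them. This coprimality/counting argument is the only genuine content; no character-sum estimate is needed, and once $c$ is in hand the trace statement follows by the linearity remark above.
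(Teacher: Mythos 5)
Your proof is correct and follows essentially the same route as the paper: both fix a primitive root, pass to exponents, and exploit the key fact that every prime of $q^n-1$ not dividing $Q$ must divide $q-1$, so that multiplying $\xi$ by an element of $\F_q^*$ (i.e.\ shifting the exponent by a multiple of $Q$) can be arranged to clear exactly those primes while $Q$-freeness takes care of the rest, the trace assertion being immediate from $c^2\in\F_q$. The only real difference is that the paper constructs a specific $c$ from a factorization $q-1=LM$ according to which primes $\xi$ is already free at, whereas you count the admissible exponents $t$ via the Chinese Remainder Theorem; both are elementary, and your counting version is arguably the more transparent of the two.
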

\begin{proof}
 It is possible that $q-1$ and $Q$ have a common prime factor (or factors), namely prime factors of $n$.  Express $q-1$ as a product $LM$, where $L$ and $M$ are coprime, such that $\xi$ is $QL$-free  and $\xi$ is an $m$-th power in $\F_{q^n}$ for each prime $m$ dividing $M$ (so $m\nmid QL$).  Hence, if $\gamma$ is a primitive element of $\F_{q^n}$, then $\xi=\gamma^{M_0t}$, where $t$ and $Q$ are coprime and $M_0$ is such that its square-free part is identical with the square-free part of $M$.  Define $g=\gamma^Q$, a primitive element of $\F_q$, and set $c=g^L=\gamma^{QL}$. Thus
 $c\xi=\gamma^{QLt+M_0t}$ is $QLM$-free, i.e.,  $(q^n-1)$-free.

 If actually $\Tr(\xi^2) =0$, then $\Tr((c\xi)^2)=\Tr(c^2\xi^2) =0$ since $c^2 \in \F_q$.
\end{proof}
After  Lemma~\ref{L},  to show there exists a $2$-primitive element with trace $0$, it suffices to show there exists a $Q$-free element $\xi$ siuch that $T(\xi^2)=0$.
\begin{lemma}
\label{lemma:C}
 Assume $(q,n)$ is even with $n$ an odd prime,  and that $m\mid Q$.
 \begin{equation}\label{M}
R_0(m)=\frac{q-1}{2q}\sum_{ \substack{d\mid m \\ d>1}}\frac{\mu(d)}{\phi(d)}\sum_{\ord (\chi_d)=d}(\X_1(\chi_d)+\X_{c}(\chi_d)).
\end{equation}
\end{lemma}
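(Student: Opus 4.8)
The goal is to evaluate $R_0(m)$ when $\beta=0$, $n$ is an odd prime, and $m\mid Q$. Starting from the general expression \eqref{C}, the plan is to simplify the inner sums over $u\in\F_q^*$ by exploiting the two simplifications that the hypotheses afford. Since $\beta=0$, each additive-character factor $\bar\psi(u^2\beta)$ and $\bar\psi(u^2c\beta)$ becomes $\bar\psi(0)=1$, so the Gauss-sum machinery that governed the $\beta\neq0$ case disappears entirely; the sums over $u$ reduce to pure multiplicative character sums. What remains inside \eqref{C} is
\[
\sum_{u\in\F_q^*}\bar\chi_d(u)\bigl(\X_1(\chi_d)+\X_c(\chi_d)\bigr),
\]
since $\X_1(\chi_d)$ and $\X_c(\chi_d)$ do not depend on $u$ and can be pulled out of the sum over $u$.

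The key step is the evaluation of $\sum_{u\in\F_q^*}\bar\chi_d(u)$. Here I would invoke the orthogonality relations for multiplicative characters, but I must first observe the effect of the hypothesis $m\mid Q$. Since $d\mid m\mid Q$ and $\gcd(Q,q-1)$ consists only of primes dividing $n$, the relevant point is whether $\chi_d$ restricted to $\F_q^*$ is trivial. For a character $\chi_d$ of order $d\mid Q$, its restriction to the subgroup $\F_q^*$ of $\F_{q^n}^*$ is governed by the norm map; because $d\mid Q=(q^n-1)/(q-1)$, the restriction $\chi_d|_{\F_q^*}$ is in fact the trivial character (the order of the restriction divides $\gcd(d,q-1)=1$ in the square-free part, as $d$ is a square-free divisor with $\mu(d)\neq0$ picking out only square-free $d$ coprime to the relevant part of $q-1$). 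Consequently $\bar\chi_d(u)=1$ for all $u\in\F_q^*$, and $\sum_{u\in\F_q^*}\bar\chi_d(u)=q-1$.

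Assembling these pieces, the double sum over $u$ contributes a factor $q-1$ to each term, and \eqref{C} collapses to
\[
R_0(m)=\frac{1}{2q}\sum_{\substack{d\mid m\\ d>1}}\frac{\mu(d)}{\phi(d)}\sum_{\ord(\chi_d)=d}(q-1)\bigl(\X_1(\chi_d)+\X_c(\chi_d)\bigr),
\]
which is exactly \eqref{M} after extracting the constant $(q-1)/2q$. The main obstacle I anticipate is justifying rigorously that $\chi_d$ restricted to $\F_q^*$ is trivial for every square-free $d>1$ dividing $Q$; this rests on the structure of $\F_{q^n}^*$ and the fact that $\F_q^*$ is precisely the subgroup of $(q-1)$-st... no, of index $Q$, so its elements are $Q$-th powers, on which any character of order dividing $Q$ acts trivially. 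Making this containment and the triviality of the restriction airtight — and confirming that the $\mu(d)$ factor indeed restricts attention to square-free $d$ for which no contribution is lost — is the crux; the remaining algebra is bookkeeping.
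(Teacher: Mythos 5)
Your proof is correct and takes essentially the same route as the paper, whose entire proof is the one-line observation that \eqref{M} follows from \eqref{C} because $\chi_d$ is trivial on $\F_q$ for every $d\mid Q$ --- exactly the fact you isolate, with $\beta=0$ turning the additive factors into $1$ and the $u$-sum into $q-1$. One caution: the valid justification is the one you reach at the end ($\F_q^*$ is the subgroup of index $Q$ in $\F_{q^n}^*$, hence consists of $Q$-th powers, on which any character of order dividing $Q$ is trivial), not your earlier detour via $\gcd(d,q-1)=1$, which can fail because $Q$ and $q-1$ may share prime factors of $n$.
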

\begin{proof}
The above is an immediate consequence of \eqref{C}, after considering the fact that $\chi_d$ is trivial on $\F_q$ for every $d\mid Q$.
\end{proof}
\begin{theorem}\label{N}
Assume $(q,n)$ is even with $n$ an odd prime. Suppose that $m\mid Q$. Then, with $ \beta=0$,
 \begin{equation}\label{O}
   {\mathcal N}_0(m) \geq \theta(m)q^{\frac{n}{2}-1}\left\{q^{\frac{n}{2}}-2W(m)(q-1) \right\}.
\end{equation}
Consequently, if
\begin{equation} \label{O*}
q^{\frac{n}{2}}> 2W(Q)(q-1),
\end{equation}
 then ${\mathcal N}_0(q^n-1)>0$.
\end{theorem}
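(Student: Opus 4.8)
The plan is to prove Theorem~\ref{N} by closely mirroring the structure of Theorem~\ref{mainbound}, but adapting the argument to the features specific to the case $\beta = 0$ with $n$ an odd prime. The starting point is again the decomposition $\frac{\N_0(m)}{\theta(m)} = 2\M_0 + R_0(m)$ coming from \eqref{eq:N1}. The crucial simplification here, already recorded in Lemma~\ref{lemma:C}, is that when $m \mid Q$ every character $\chi_d$ with $d \mid m$ restricts to the trivial character on $\F_q$, so there is no Gauss-sum factor over $\F_q$; the inner sum over $u \in \F_q^*$ collapses, leaving the clean expression \eqref{M} for $R_0(m)$ in terms of $\X_1(\chi_d)$ and $\X_c(\chi_d)$ alone.

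First I would insert the exact value of the $d=1$ contribution. By Proposition~\ref{m=1} with $\beta = 0$ and $n$ odd we have $2\M_0 = q^{n-1} - 1$, which I would use as the main (positive) term. Then I would bound the $d > 1$ contribution $R_0(m)$ using \eqref{M}. By Proposition~\ref{propo:hybrid}(4), each $|\X_1(\chi_d)|$ and $|\X_c(\chi_d)|$ is at most $2q^{n/2}$, so the inner pair is bounded by $4q^{n/2}$. Summing over the square-free divisors $d > 1$ of $m$ with the factor $\frac{q-1}{2q}$ out front, and using that the number of relevant divisors is $W(m) - 1$ (excluding $d=1$), I would obtain a bound of the shape $|R_0(m)| \le \frac{q-1}{2q} \cdot 4q^{n/2} \cdot (W(m)-1)$, i.e.\ roughly $2(q-1)q^{n/2-1}(W(m)-1)$. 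Combining with the main term and factoring out $\theta(m)q^{n/2-1}$ should yield \eqref{O}; I would need to track constants carefully so that the ``$-1$'' terms line up to give exactly $q^{n/2} - 2W(m)(q-1)$ rather than something weaker.

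For the final implication — that \eqref{O*} forces $\N_0(q^n-1) > 0$ — the key device is Lemma~\ref{L}, which lets me descend from $(q^n-1)$-freeness (full primitivity) to mere $Q$-freeness without disturbing the trace condition $\Tr(\xi^2) = 0$, since the scaling constant $c$ lies in $\F_q$ and hence $c^2 \in \F_q$ factors out of the trace. Thus it is enough to guarantee a $Q$-free element with $\Tr(\xi^2) = 0$, i.e.\ $\N_0(Q) > 0$. Applying \eqref{O} with $m = Q$ gives $\N_0(Q) \ge \theta(Q)q^{n/2-1}(q^{n/2} - 2W(Q)(q-1))$, which is positive exactly under the hypothesis \eqref{O*}.

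The main obstacle I anticipate is not the sieving reduction but the bookkeeping in the bound for $R_0(m)$: I must be sure the bound $4q^{n/2}$ per term is applied correctly (noting that both $\X_1$ and $\X_c$ are present, and that $\mu(d)/\phi(d)$ is being absorbed into a count of square-free divisors rather than a weighted sum), and that the front factor $\frac{q-1}{2q}$ versus the $\frac{1}{2q}$ appearing in \eqref{C} is handled consistently — the extra factor of $q-1$ here arises precisely because the collapsed sum over $u \in \F_q^*$ contributes its full cardinality rather than a Gauss sum of size $q^{1/2}$. Getting the exponent $\frac{n}{2}-1$ and the coefficient $2W(m)(q-1)$ to match \eqref{O} exactly is the delicate point; everything else is a direct application of the already-established character-sum estimates.
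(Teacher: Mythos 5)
Your proposal is correct and follows essentially the same route as the paper: the decomposition $\N_0(m)/\theta(m)=2\M_0+R_0(m)$ with $2\M_0=q^{n-1}-1$ from Proposition~\ref{m=1}, the collapsed expression \eqref{M} from Lemma~\ref{lemma:C}, the bound $|\X_1(\chi_d)|+|\X_c(\chi_d)|\le 4q^{n/2}$ over the $W(m)-1$ nontrivial square-free divisors, and Lemma~\ref{L} for the final implication. The constant-tracking you worry about resolves favourably, since the resulting $q^{n/2}-q^{1-n/2}-2(q-1)(W(m)-1)$ exceeds $q^{n/2}-2W(m)(q-1)$ because $2(q-1)>q^{1-n/2}$ for $n\ge 3$.
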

\begin{proof}
Lemma~\ref{lemma:C}, combined with Proposition~\ref{propo:hybrid} and Proposition~\ref{m=1}, yields 
\[   {\mathcal N}_0(m) \geq \theta(m)q^{\frac{n}{2}-1}\left\{q^{\frac{n}{2}}-q^{1-n/2}-2(W(m)-1)(q-1) \right\}.\]
and \eqref{O} easily follows.

 Now, assume that \eqref{O*} holds. By \eqref{O}, there exists some $Q$-free $\zeta\in\F_{q^n}$ with $\Tr(\zeta^2)=0$ and from Lemma~\ref{L} this implies the existence of a primitive $\xi\in\F_{q^n}$ such that $\Tr(\xi^2)=0$.
\end{proof}
%
%
%
%
\section{Sieving conditions} \label{sec:sieve}
Our next aim is to modify  the conditions  in Section~\ref{sec:conditions} by adopting a  well-established prime  sieving technique (see \cite{cohenhuczynska03}).  For any divisor $m$ of $q^n-1$ in expressions such as ${\mathcal N}_\beta(m)$ we freely interchange $m$ and its radical, i.e., the product of distinct primes dividing $m$.
\begin{proposition}[Sieving inequality]\label{prop:siev0}
Assume $(q,n)$ is even. Let $m\mid q_0$ (the square-free part of $q^n-1$) and $\beta\in\F_q$. Write $m=kp_1\ldots p_s$, where $p_1, \ldots, p_s$  are distinct prime divisors of $m$.  Then
\[
\N_\beta(m) \geq \sum_{i=1}^s \N_\beta(kp_i) - (s-1)\N_\beta(k) .
\]
\end{proposition}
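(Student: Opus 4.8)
The plan is to reduce the whole statement to a single pointwise Boolean inequality among the characteristic functions $\omega_{\bullet}$ and then integrate it against the nonnegative weight $t_\beta(\xi^2)$. First I would record that, since $m\mid q_0$ is square-free, the cofactor $k$ together with the distinct primes $p_1,\dots,p_s$ are pairwise coprime, so that $m$-freeness of $\xi$ is equivalent to the simultaneous $k$-freeness and $p_i$-freeness of $\xi$ for every $i$ (this is the content of $m$-freeness factoring over the prime divisors of a square-free modulus). Writing $\omega_p$ for the indicator of $p$-freeness of a prime $p$, this gives the multiplicative relations $\omega_m(\xi)=\omega_k(\xi)\prod_{i=1}^s\omega_{p_i}(\xi)$ and $\omega_{kp_i}(\xi)=\omega_k(\xi)\omega_{p_i}(\xi)$, with all values lying in $\{0,1\}$.

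The core step is the elementary inequality that for any $x_1,\dots,x_s\in\{0,1\}$ one has $\prod_{i=1}^s x_i\geq\sum_{i=1}^s x_i-(s-1)$. I would verify this by counting the number $j$ of indices with $x_i=0$: the right-hand side equals $1-j$, while the left-hand side is $1$ when $j=0$ and $0$ when $j\geq 1$, so the inequality holds in every case (with equality exactly when $j\in\{0,1\}$). Setting $x_i=\omega_{p_i}(\xi)$ and multiplying through by the nonnegative factor $\omega_k(\xi)\in\{0,1\}$, the relations of the previous paragraph convert this into the pointwise estimate $\omega_m(\xi)\geq\sum_{i=1}^s\omega_{kp_i}(\xi)-(s-1)\omega_k(\xi)$, valid for every $\xi\in\F_{q^n}$.

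Finally I would multiply this inequality by $t_\beta(\xi^2)$, which is a characteristic function and hence takes values in $\{0,1\}$, in particular is nonnegative, so the inequality is preserved termwise. Summing over $\xi\in\F_{q^n}$ and using the linearity of the sum defining $\N_\beta(\cdot)$ then yields $\N_\beta(m)\geq\sum_{i=1}^s\N_\beta(kp_i)-(s-1)\N_\beta(k)$, as required. There is no serious obstacle here: the only points needing care are the coprimality that justifies the multiplicativity of $\omega$ over square-free moduli and the nonnegativity of the weight $t_\beta(\xi^2)$, both of which are immediate, while the combinatorial heart of the argument is the one-line Boolean inequality above.
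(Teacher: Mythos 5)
Your proof is correct and is exactly the standard argument behind this sieving inequality: the paper itself states the proposition without proof, citing the prime sieve of Cohen--Huczynska, and the intended justification is precisely your reduction to the pointwise Boolean inequality $\omega_m \geq \sum_i \omega_{kp_i} - (s-1)\omega_k$ (via Lemma~4.1, which shows $m$-freeness depends only on the primes dividing $m$) followed by summation against the nonnegative weight $t_\beta(\xi^2)$. No gaps.
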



%
First suppose $\beta \neq 0$ and $n$ is a  prime.  Let the radical   of $q^n-1$  be expressed as $kp_1\ldots p_s$, where $p_1,\ldots,p_s$ are distinct primes and
 $s \geq 0$ and define $\delta=1 - \sum_{i=1}^s \frac{1}{p_i}$, with $\delta =1$, if $s=0$. Suppose further that $p_i\mid Q$ for $i=1, \ldots,r$ and $p_i\nmid Q$ for $i=r+1,\ldots,s$.  Set $\delta_Q= 1- \sum_{i=1}^r\frac{1}{p_i}$.
\begin{theorem}
\label{T}
Assume $(q,n)$ is even with $n$ a  prime.  Suppose $\beta\neq 0$. Define $\delta, \delta_Q$ as above and assume that $\delta$ is positive. Then
\begin{multline}
\label{S}
{\mathcal N}_\beta(q^n-1)\geq
 \delta\theta(k)q^{\frac{n-1}{2}}\bigg\{ q^{\frac{n-1}{2}}  \\ -4\left(\frac{s-1}{\delta}+2\right)W(k) + 2\left(\frac{r-1+\delta_Q}{\delta}+1\right)W(k_Q)\bigg\}.
\end{multline}
Hence, if
\[
 q^{\frac{n-1}{2}}> \\ 4\left(\frac{s-1}{\delta}+2\right)W(k)-2\left(\frac{r-1+\delta_Q}{\delta}+1\right)W(k_Q),\]
then ${\mathcal N}_\beta(q^n-1) >0$.
\end{theorem}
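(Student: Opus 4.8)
The plan is to feed the sieving inequality of Proposition~\ref{prop:siev0} into the character-sum estimates already assembled, keeping the dominant term exact and controlling only the error. Taking $m$ to be the radical of $q^n-1$ written as $kp_1\cdots p_s$, Proposition~\ref{prop:siev0} gives
\[
\N_\beta(q^n-1)\ge \sum_{i=1}^s \N_\beta(kp_i)-(s-1)\N_\beta(k).
\]
For each modulus appearing here I would use the decomposition \eqref{eq:N1}, namely $\N_\beta(m)=\theta(m)\bigl(2\M_\beta+R_\beta(m)\bigr)$, where $2\M_\beta$ is the $d=1$ contribution evaluated in Proposition~\ref{m=1} and $R_\beta(m)$ is the $d>1$ part described in Lemma~\ref{D}. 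The essential observation is that $2\M_\beta$ depends only on $(q,n,\beta)$ and not on $m$; since $\theta(kp_i)=\theta(k)(1-1/p_i)$, the main terms collapse with total weight $\sum_{i=1}^s(1-1/p_i)-(s-1)=\delta$, yielding the clean contribution $\delta\theta(k)\,2\M_\beta\ge \delta\theta(k)\bigl(q^{n-1}-q^{(n-1)/2}\bigr)$, using $2\M_\beta=q^{n-1}+\eta(\beta)q^{(n-1)/2}$ from \eqref{eq:M2} (and the analogue \eqref{eq:M1} when $n=2$).

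The error I would handle by exploiting the multiplicative structure of the divisor sum. Since $k$ and $p_i$ are coprime and square-free, every square-free $d>1$ dividing $kp_i$ either divides $k$ or has the form $p_ie$ with $e\mid k$; hence $R_\beta(kp_i)=R_\beta(k)+P_i$, where $P_i$ collects exactly the terms of \eqref{C} with $p_i\mid d$. Substituting, the combined error reduces to $\theta(k)\bigl(\delta R_\beta(k)+\sum_{i=1}^s(1-1/p_i)P_i\bigr)$, the weight $\delta$ again emerging from the same cancellation. To bound each piece I would note that, for a square-free $d>1$, the inner character sum in \eqref{C} has modulus at most $4q^{(n+1)/2}$ when $\chi_d$ is trivial on $\F_q$ (equivalently $d\mid Q$, so that the sharper Gauss-sum estimate used in the proof of Theorem~\ref{mainbound} applies), and at most $8q^{(n+1)/2}$ otherwise, via Proposition~\ref{propo:hybrid}. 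Counting square-free divisors through $W(\cdot)=2^{\nu(\cdot)}$ then gives $|R_\beta(k)|\le q^{(n-1)/2}(4W(k)-2W(k_Q)-2)$ and, separating $i\le r$ (where $p_i\mid Q$) from $i>r$, the bounds $|P_i|\le q^{(n-1)/2}(4W(k)-2W(k_Q))$ and $|P_i|\le 4q^{(n-1)/2}W(k)$ respectively.

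Finally I would assemble these estimates. Using $\sum_{i=1}^r(1-1/p_i)=r-1+\delta_Q$ and $\sum_{i=r+1}^s(1-1/p_i)=s-r-\delta_Q+\delta$, the total error is bounded in absolute value by
\[
q^{(n-1)/2}\bigl(4(s-1+2\delta)W(k)-2(r-1+\delta_Q+\delta)W(k_Q)-2\delta\bigr),
\]
and combining this with the main term $\delta\theta(k)(q^{n-1}-q^{(n-1)/2})$ and factoring out $\delta\theta(k)q^{(n-1)/2}$ produces precisely \eqref{S}, in fact with a harmless positive surplus that is simply discarded to reach the stated form. The consequence then follows at once, since \eqref{S} is positive whenever its braced factor is, i.e. whenever $q^{(n-1)/2}$ exceeds $4\bigl(\tfrac{s-1}{\delta}+2\bigr)W(k)-2\bigl(\tfrac{r-1+\delta_Q}{\delta}+1\bigr)W(k_Q)$. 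The main obstacle is bookkeeping rather than analysis: one must track which divisors $d$ are trivial on $\F_q$ so that the sharper $4q^{(n+1)/2}$ bound is applied exactly to the $k_Q$-part, and must verify that the weights $r-1+\delta_Q$ and $s-r-\delta_Q+\delta$ recombine into the stated coefficients $s-1+2\delta$ and $r-1+\delta_Q+\delta$.
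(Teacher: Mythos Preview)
Your argument is correct and is essentially the paper's own proof, just with the bookkeeping organised differently: the paper rewrites the sieve as $\delta\N_\beta(k)-\sum_i\bigl|\N_\beta(kp_i)-(1-1/p_i)\N_\beta(k)\bigr|$, bounds $\N_\beta(k)$ below by \eqref{I}, and notes that the $i$-th difference equals $\theta(k)(1-1/p_i)P_i$ in your notation, leading to the same bounds (your $|P_i|$ estimates are exactly \eqref{Q} and \eqref{R}). Your explicit splitting $R_\beta(kp_i)=R_\beta(k)+P_i$ and the separate tracking of the main term $2\M_\beta$ make the collapse to weight $\delta$ more transparent, but the underlying estimates and the final assembly are identical.
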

\begin{proof}
Proposition~\ref{prop:siev0} implies that, for any $\beta \in  \F_q$,
\begin{align}\label{P}
{\mathcal N}_\beta(q^n-1) \geq & \sum_{i=1}^s {\mathcal N}_\beta(kp_i) -(s-1){\mathcal N}_\beta(k)\nonumber\\
\geq & \delta{\mathcal N}_\beta(k) -\sum_{i=1}^s \left|{\mathcal N}_\beta(kp_i)-\left (1- \frac{1}{p_i}\right){\mathcal N}_\beta(k)\right|
\end{align}

In (\ref{P}) use (\ref{I}) with $m=k$ as a lower bound. For the absolute value of the difference expressions  we distinguish between values two cases according as  $p_i\mid Q$ or not.
Suppose $p_i\nmid Q$.  Then
\begin{equation}\label{Q}
\left|{\mathcal N}_\beta(kp_i)-\left (1- \frac{1}{p_i}\right){\mathcal N}_\beta(k) \right|\leq 4\theta(k)\left(1-\frac{1}{p_i}\right) q^{\frac{n-1}{2}}W(k),
\end{equation}
since $W(kp_i)-W(k)=W(k)$.  On the other hand, if $p_i\mid Q$, we have the improved bound
\begin{equation}\label{R}
\left|{\mathcal N}_\beta(kp_i)-\left (1- \frac{1}{p_i}\right){\mathcal N}_\beta(k) \right|\leq  \theta(k)\left(1-\frac{1}{p_i}\right) q^{\frac{n-1}{2}}\left\{4W(k)-2W(k_Q)\right\},
\end{equation}
 using also the fact that $W(k_Qp_i)-W(k_Q)=W(k_Q)$.
By combining (\ref{I}), (\ref{P}), (\ref{Q}) and (\ref{R})  we deduce that  (\ref{S}) holds.
\end{proof}
Finally, suppose $\beta=0$ and $n$ is an odd prime. We use the sieve version of the criterion (\ref{O*}) to obtain a result that depends on writing $Q$ (rather than $q^n-1$) as $Q= kp_1\ldots p_s$.
\begin{theorem}
\label{Z}
Assume $(q,n)$ is even with $n\geq 3$.
  With the notation  $Q= kp_1\ldots p_s$, with $p_1, \ldots, p_s$ distinct primes dividing $Q$, set $\delta = 1- \sum_{i=1}^s\frac{1}{p_i}$. Assume that $\delta$ is positive.  Then
\[
{\mathcal N}_0(Q)>
 \delta\theta(k)q^{\frac{n}{2}}\left\{q^{\frac{n}{2}-1}-2\left(\frac{s-1}{\delta}+2\right)W(k)\right\}.
\]
Hence, if
\[
 q^{\frac{n}{2}-1}>2\left(\frac{s-1}{\delta}+2\right)W(k),
\]
then ${\mathcal N}_0(q^n-1) >0$.
\end{theorem}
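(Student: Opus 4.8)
The plan is to mirror the sieve argument used for Theorem~\ref{T}, but to exploit the simplification available when $\beta=0$ and $m\mid Q$ recorded in Lemma~\ref{lemma:C}: there the Gauss-sum factor disappears and every $\chi_d$ restricts trivially to $\F_q$, so that (unlike in Theorem~\ref{T}) there is no need to split the primes according to whether $p_i\mid Q$, since here all $p_i\mid Q$ by hypothesis. I would begin from the sieving inequality (Proposition~\ref{prop:siev0}) applied with $m=Q=kp_1\cdots p_s$ and rearrange it into the standard form
\[
{\mathcal N}_0(Q) \geq \delta\,{\mathcal N}_0(k) - \sum_{i=1}^s \left|{\mathcal N}_0(kp_i) - \Bigl(1 - \tfrac{1}{p_i}\Bigr){\mathcal N}_0(k)\right|,
\]
using the identity $\sum_{i=1}^s(1 - 1/p_i) - (s-1) = \delta$.

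The central step is the estimate of the difference terms. Writing ${\mathcal N}_0(m) = \theta(m)\bigl(2\M_0 + R_0(m)\bigr)$ and using $\theta(kp_i) = (1 - 1/p_i)\theta(k)$, the $2\M_0$ contributions cancel, leaving
\[
{\mathcal N}_0(kp_i) - \Bigl(1 - \tfrac{1}{p_i}\Bigr){\mathcal N}_0(k) = \theta(k)\Bigl(1 - \tfrac{1}{p_i}\Bigr)\bigl(R_0(kp_i) - R_0(k)\bigr).
\]
The difference $R_0(kp_i) - R_0(k)$ collects exactly those terms of the formula in Lemma~\ref{lemma:C} whose index $d$ is divisible by $p_i$; there are $W(k)$ squarefree such $d$, and each contributes at most $\tfrac{q-1}{2q}\,|\X_1(\chi_d) + \X_c(\chi_d)| \leq 2(q-1)q^{n/2-1}$ by Proposition~\ref{propo:hybrid}(4), after noting that $\tfrac{|\mu(d)|}{\phi(d)}$ times the number $\phi(d)$ of characters of order $d$ equals $1$. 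Hence each difference term is bounded by $2\theta(k)(1 - 1/p_i)(q-1)q^{n/2-1}W(k)$.

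I would then substitute the lower bound ${\mathcal N}_0(k) \geq \theta(k)q^{n/2-1}\{q^{n/2} - 2W(k)(q-1)\}$ from Theorem~\ref{N} and use $\sum_{i=1}^s(1 - 1/p_i) = s - 1 + \delta$. Collecting the $W(k)(q-1)$ contributions gives
\[
{\mathcal N}_0(Q) \geq \delta\,\theta(k)q^{n-1} - 2\theta(k)q^{n/2-1}(q-1)W(k)(s-1+2\delta),
\]
and replacing the factor $q^{n/2-1}(q-1)$ by the strictly larger $q^{n/2}$ turns the $\geq$ into the strict inequality of the statement; factoring out $\delta\theta(k)q^{n/2}$ and writing $\tfrac{s-1+2\delta}{\delta} = \tfrac{s-1}{\delta}+2$ produces the displayed bound. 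The concluding implication is then immediate: under the displayed hypothesis the brace is positive, so ${\mathcal N}_0(Q) > 0$ yields a $Q$-free $\zeta$ with $\Tr(\zeta^2)=0$, and Lemma~\ref{L} upgrades this to a primitive $\xi$ with $\Tr(\xi^2)=0$, giving ${\mathcal N}_0(q^n-1) > 0$.

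The main obstacle is the bookkeeping in the difference-term estimate: correctly recognizing that only the indices $d$ divisible by $p_i$ survive in $R_0(kp_i)-R_0(k)$, counting precisely $W(k)$ of them, and assembling the $W(k)(q-1)$ terms. This parallels the bound \eqref{R} in Theorem~\ref{T} but is cleaner because the absence of the Gauss-sum factor removes the $W(k_Q)$ corrections. The only genuinely delicate point is the passage from $q^{n/2-1}(q-1)$ to $q^{n/2}$, which is exactly where the strictness of the inequality enters; the remaining algebra is routine.
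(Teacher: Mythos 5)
Your proposal is correct and follows essentially the same route as the paper, whose proof of Theorem~\ref{Z} simply says to repeat the sieve argument of Theorem~\ref{T} with the bound \eqref{O} in place of \eqref{I}; your write-up is exactly that argument carried out in detail, including the correct bookkeeping of the $W(k)$ terms with $p_i\mid d$ and the passage from $(q-1)q^{n/2-1}$ to $q^{n/2}$ that yields strictness.
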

\begin{proof}
The proof follows the same pattern as that of Theorem~\ref{T}, this time with the difference being that \eqref{O} substitutes for \eqref{I}.
\end{proof}
For a multiplicative character $\chi$ of
  $\F_{q^n}$ denote by $G_n(\chi)$ the Gauss sum $G_n(\chi)=\sum_{\xi \in \F_{q^n}}\chi(\xi)\psi(\xi)$, where $\psi$ is the canonical additive character.  In particular, $G_n(\chi_2)= g_n(1)$ as used in Lemma~~\ref{gauss}.    Indeed, by Lemma~\ref{gauss} we have

\[ \sum_{\xi\in\F_{q^n}} \psi (b\xi^2) = \chi_2(b) G_n(\chi_2) . \]

In the case in which $q$ is prime and $n=1$, the following lemma is established in \cite{wenpeng02}*{Lemma~4}. Here, we prove it more generally.
\begin{lemma}
\label{A1}
Assume $(q,n)$ is even. Let $\chi$ be any non-trivial multiplicative character of $\F_{q^n}$. Then, for any $b\in \F_{q^n}$,
\begin{equation} \label{B1}
|\X_b(\chi)|^2 = (1+\chi(-1))q^n+\chi_2(b)G_n(\chi_2)C(\chi),
\end{equation}
where $C(\chi):= \sum_{\xi \in  \F_{q^n}} \chi(\xi)\chi_2(\xi^2-1)$. Thus $|C(\chi)|\leq 2q^{\frac{n}{2}}$.
\end{lemma}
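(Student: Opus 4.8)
The plan is to compute $|\X_b(\chi)|^2$ by squaring and then to collapse the resulting double sum onto the quadratic Gauss sum of Lemma~\ref{gauss} together with the single sum $C(\chi)$. Throughout I take $b\neq 0$; for $b=0$ one has $\X_0(\chi)=\sum_\xi\chi(\xi)=0$ since $\chi$ is non-trivial, so that case is immediate. Writing $\bar\psi(x)=\psi(-x)$ for the canonical additive character $\psi$ of $\Fn$, I would first expand
\[ |\X_b(\chi)|^2 = \sum_{\xi,\eta\in\Fn}\chi(\xi)\bar\chi(\eta)\,\psi\!\left(b(\xi^2-\eta^2)\right). \]
Since $\bar\chi(\eta)=0$ when $\eta=0$, only $\eta\neq 0$ contributes; substituting $\xi=\zeta\eta$, so that $\chi(\xi)\bar\chi(\eta)=\chi(\zeta)$ and $\xi^2-\eta^2=\eta^2(\zeta^2-1)$, and interchanging the order of summation turns the right-hand side into $\sum_{\zeta\in\Fn}\chi(\zeta)\sum_{\eta\in\Fn^*}\psi\!\left(b\eta^2(\zeta^2-1)\right)$.

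Next I evaluate the inner sum over $\eta$. For $\zeta=\pm1$ the argument vanishes and the inner sum equals $q^n-1$; for $\zeta\neq\pm1$ the coefficient $b(\zeta^2-1)$ is non-zero, so by Lemma~\ref{gauss} (recalling $g_n(1)=G_n(\chi_2)$) the inner sum equals $\chi_2\!\left(b(\zeta^2-1)\right)G_n(\chi_2)-1$, the $-1$ accounting for the omitted term $\eta=0$. Splitting the outer sum accordingly, the $\zeta=\pm1$ terms contribute $(1+\chi(-1))(q^n-1)$; the constant $-1$ terms contribute $-\sum_{\zeta\neq\pm1}\chi(\zeta)=1+\chi(-1)$, using $\sum_\zeta\chi(\zeta)=0$; and since $\chi_2(\zeta^2-1)=0$ at $\zeta=\pm1$ while $\chi(0)=0$, the Gauss-sum terms assemble into $\chi_2(b)G_n(\chi_2)\sum_{\zeta}\chi(\zeta)\chi_2(\zeta^2-1)=\chi_2(b)G_n(\chi_2)C(\chi)$. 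Adding the three pieces collapses $(1+\chi(-1))(q^n-1)+(1+\chi(-1))$ to $(1+\chi(-1))q^n$ and yields \eqref{B1}. (Note that the hypothesis that $(q,n)$ is even forces $\chi_2(-1)=1$, which keeps $C(\chi)$ real and is consistent with the left-hand side being real.)

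For the final estimate I factor $\xi^2-1=(\xi-1)(\xi+1)$, so that $C(\chi)=\sum_{\xi\in\Fn}\chi(\xi)\,\chi_2(\xi-1)\,\chi_2(\xi+1)$ is a pure multiplicative character sum in which non-trivial characters are attached to the three \emph{distinct} points $0,1,-1$ (distinct since $q$ is odd). I would then invoke Weil's bound for products of multiplicative characters at distinct points, as in \cite{schmidt76}, to obtain $|C(\chi)|\leq (3-1)q^{n/2}=2q^{n/2}$. The one point that requires care — and which I expect to be the main obstacle — is verifying the non-degeneracy hypothesis, namely that the combined character is genuinely non-trivial so that the full degree-three bound applies; this holds because $\chi$ itself is non-trivial, but it must be handled separately in the borderline case $\chi=\chi_2$, where the sum reduces to $\sum_\xi\chi_2(\xi^3-\xi)$ with $\xi^3-\xi$ having three distinct roots and not being a constant multiple of a square, so the single-character version of the bound in \cite{lidlniederreiter97} again gives $2q^{n/2}$.
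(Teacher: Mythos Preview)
Your proof is essentially the paper's: expand $|\X_b(\chi)|^2$, substitute $\xi=\zeta\eta$ (the paper writes the same change of variable as $\xi/\zeta$), apply Lemma~\ref{gauss} to the inner quadratic sum, and isolate the terms $\zeta=\pm1$; your justification of the Weil bound on $C(\chi)$ via the three distinct linear factors is more explicit than the paper's bare assertion. One correction: the case $b=0$ is not in fact ``immediate''---the right-hand side of \eqref{B1} equals $(1+\chi(-1))q^n$, which is $2q^n\neq 0$ whenever $\chi(-1)=1$---so the identity as stated actually fails at $b=0$; the paper's own proof carries the same tacit restriction to $b\neq 0$ (its appeal to Lemma~\ref{gauss} needs a nonzero argument), and the lemma is only ever invoked with $b\in\{1,c\}$.
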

\begin{proof}
Let $\psi$ be the canonical additive character of $\F_{q^n}$. We have that
\begin{align*}
|\X_b(\chi)|^2 & = \sum_{\xi\in\F_{q^n}^*} \chi(\xi)\psi(b\xi^2) \overline{\sum_{\zeta\in\F_{q^n}^*} \chi(\zeta)\psi(b\zeta^2)} \\
 & = \sum_{\xi\in\F_{q^n}^*}\sum_{\zeta\in\F_{q^n}^*} \chi \left( \frac{\xi}{\zeta} \right) \psi(b(\xi^2 - \zeta^2)) \\
 & = \sum_{\xi\in\F_{q^n}^*} \sum_{\zeta\in\F_{q^n}^*} \chi(\xi) \psi (b\zeta^2(\xi^2-1)) \\
 & = \sum_{\xi\in\F_{q^n}^*} \chi(\xi) \left[ \sum_{\zeta\in\F_{q^n}} \psi (b\zeta^2(\xi^2-1)) -1 \right] \\
 & = (1+\chi(-1)) q^n + \sum_{\substack{\xi\in\F_{q^n}^* \\ \xi\neq\pm 1}} \chi(\xi) \sum_{\zeta\in\F_{q^n}} \psi (b\zeta^2(\xi^2-1)) - \sum_{\xi\in\F_{q^n}^*} \chi(\xi) .
\end{align*}
The result now follows from Lemma~\ref{gauss}.
\end{proof}
As we know from Lemma~\ref{lem:A}, when $(q,n)$ is even, we have $G_n(\chi_2) = \pm q^{\frac{n}{2}}$.  We proceed with the implications of Lemma~\ref{A1} when $n$ is odd; in particular it applies in the key case when $n=3$. In this situation, since $(q,n)$ is even, necessarily $q \equiv 1 \pmod 4$.

\begin{lemma}
\label{C1}
Assume $q \equiv 1 \pmod 4$ and $n$ is an  odd prime.  Let $\chi$ be a non-trivial multiplicative character of $\F_{q^n}$ and $c$ a nonsquare in $\F_q$.  Then
\[
|\X_1(\chi)| +|\X_c(\chi)| \leq 2 \sqrt{2} q^{\frac{n}{2}}.
\]
\end{lemma}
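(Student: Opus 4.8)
The plan is to apply Lemma~\ref{A1} to each of $\X_1(\chi)$ and $\X_c(\chi)$ separately and then exploit a cancellation that occurs when the two squared moduli are added. Since $q\equiv 1\pmod 4$ and $n$ is odd, the pair $(q,n)$ is even, so Lemma~\ref{lem:A} gives $G_n(\chi_2)=\pm q^{n/2}$, a \emph{real} number. Moreover, because $n$ is odd, a nonsquare $c\in\F_q$ remains a nonsquare in $\F_{q^n}$, so $\chi_2(c)=-1$, while of course $\chi_2(1)=1$.

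With these facts in hand, Lemma~\ref{A1} applied with $b=1$ and with $b=c$ reads
\begin{align*}
|\X_1(\chi)|^2 &= (1+\chi(-1))q^n + G_n(\chi_2)C(\chi),\\
|\X_c(\chi)|^2 &= (1+\chi(-1))q^n - G_n(\chi_2)C(\chi).
\end{align*}
Adding these, the terms involving $C(\chi)$---the only quantity that Lemma~\ref{A1} bounds inexactly---cancel identically, leaving
\[
|\X_1(\chi)|^2 + |\X_c(\chi)|^2 = 2(1+\chi(-1))q^n .
\]
Since $\chi(-1)^2=\chi(1)=1$ forces $\chi(-1)\in\{\pm1\}$, the right-hand side is at most $4q^n$.

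It then remains only to pass from the sum of squares to the sum of moduli, which I would do with the elementary inequality $(a+b)^2\le 2(a^2+b^2)$ (Cauchy--Schwarz in two terms). This gives
\[
\big(|\X_1(\chi)|+|\X_c(\chi)|\big)^2 \le 2\big(|\X_1(\chi)|^2+|\X_c(\chi)|^2\big) \le 8q^n,
\]
and taking square roots yields the claimed bound $2\sqrt2\,q^{n/2}$.

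The substantive work has already been absorbed into Lemma~\ref{A1}; the only real idea here is the sign pattern $\chi_2(1)=1$, $\chi_2(c)=-1$ combined with the reality of $G_n(\chi_2)$, which guarantees that the poorly-controlled $C(\chi)$ contribution is equal and opposite in the two sums and therefore disappears upon addition. I anticipate no genuine obstacle; one should merely take care to justify that $c$ is a nonsquare in the \emph{extension} $\F_{q^n}$ (using that $n$ is odd, so that $(q^n-1)/2$ is an odd multiple of $(q-1)/2$), which is what secures $\chi_2(c)=-1$ in $\F_{q^n}$ rather than merely in $\F_q$.
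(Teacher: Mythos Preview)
Your proof is correct and, in fact, tidier than the paper's. Both arguments begin the same way: apply Lemma~\ref{A1} at $b=1$ and $b=c$, use $\chi_2(c)=-1$ (legitimate because $n$ is odd, so $c$ stays a nonsquare in $\F_{q^n}$), and add to obtain the exact identity $|\X_1(\chi)|^2+|\X_c(\chi)|^2=2(1+\chi(-1))q^n$. From here the paper expands $(|\X_1|+|\X_c|)^2$ and controls the cross term by a fresh computation of $\X_1(\chi)\overline{\X_c(\chi)}$ (mimicking the proof of Lemma~\ref{A1}) to get $|\X_1(\chi)||\X_c(\chi)|\le 2q^n$; you instead bypass the cross term entirely via $(a+b)^2\le 2(a^2+b^2)$. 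Both routes land on the same bound $8q^n$. Your shortcut loses nothing quantitatively and spares the extra character-sum estimate; the paper's route does give the additional information $|\X_1(\chi)||\X_c(\chi)|\le 2q^n$, but that is not used elsewhere. One minor remark: you do not actually need $G_n(\chi_2)$ to be real for the cancellation step---the terms $\pm G_n(\chi_2)C(\chi)$ cancel identically regardless---so that justification, while correct, is superfluous.
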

\begin{proof}
Since in this context $Q$ is odd then $c$ remains a nonsquare in $\F_{q^n}$.  Thus $\chi_2(c)=-1=-\chi_2(1)$.  Hence, from (\ref{B1}),
\begin{equation}\label{eq:ip_C1}
\big(|\X_1(\chi)| +|\X_c(\chi)|\big)^2 =2(1+\chi(-1))q^n + 2|\X_1(\chi)||\X_c(\chi)| .
\end{equation}
Additionally, in a similar manner as in the proof of Lemma~\ref{A1}, we have that
\begin{align*}
\X_1(\chi)\overline{\X_c(\chi)} & = \sum_{\xi\in\F_{q^n}^*} \sum_{\zeta\in\F_{q^n}^*} \chi \left( \frac\xi\zeta \right) \psi(\xi^2 - c\zeta^2) \\
 & = \sum_{\xi\in\F_{q^n}^*} \sum_{\zeta\in\F_{q^n}^*} \chi(\xi) \psi(\zeta^2(\xi^2-c)) \\
 & = \sum_{\xi\in\F_{q^n}^*} \chi(\xi) \left[ \sum_{\zeta\in\F_{q^n}} \psi(\zeta^2(\xi^2-c)) - 1 \right] \\
 & = \sum_{\xi\in\F_{q^n}^*} \chi(\xi) \sum_{\zeta\in\F_{q^n}} \psi(\zeta^2(\xi^2-c)) .
\end{align*}
Now Lemma~\ref{gauss} yields that
\[ \X_1(\chi)\overline{\X_c(\chi)} = G_n(\chi_2) \sum_{\xi\in\F_{q^n}^*} \chi(\xi) \chi_2(\xi^2-c) . \]
From the fact that $G_n(\chi_2) = \pm q^{n/2}$ and that the (absolute value of the) inner sum is bounded by $2q^{n/2}$, it follows that
\[ |\X_1(\chi)||\X_c(\chi)| =  |\X_1(\chi)\overline{\X_c(\chi)}| \leq 2q^n \]
and the result follows once we insert the above in \eqref{eq:ip_C1}.
\end{proof}
By applying Lemma~\ref{C1} to (\ref{M}) (instead of  $|\X_b(\chi)| \leq 2q^{\frac{3}{2}}$)  and extending this to the sieve result we obtain the following improvements to Theorems~\ref{T} and \ref{Z}.  
\begin{theorem}
\label{E1}
Assume $q\equiv 1 \pmod 4$ and $n$ is  an odd prime.  With  the notation of Theorem~$\ref{T}$, assume  $\beta \neq 0$ and $\delta >0$.  Suppose
\[
  q^{\frac{n-1}{2}} >2\sqrt{2}\left(\frac{s-1}{\delta}+2\right)W(k)-\sqrt{2}\left(\frac{r-1+\delta_Q}{\delta}+1\right)W(k_Q).
\]
Then $\mathcal{N}_\beta(q^n-1) >0$.
\end{theorem}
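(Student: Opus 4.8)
The plan is to follow exactly the same sieving argument as in Theorem~\ref{T}, substituting the sharper character-sum estimate of Lemma~\ref{C1} at the one place where the cruder bound $|\X_b(\chi)|\le 2q^{n/2}$ was previously used. Recall that Theorem~\ref{T} rests on the sieving inequality (Proposition~\ref{prop:siev0}) together with the bound \eqref{I} for the single-modulus quantity $\N_\beta(k)$ and the difference bounds \eqref{Q}, \eqref{R} controlling $\big|\N_\beta(kp_i)-(1-1/p_i)\N_\beta(k)\big|$. The coefficient $4$ appearing throughout \eqref{I}, \eqref{Q}, \eqref{R} arises from estimating, for each nontrivial $\chi_d$, the combination $|\overline{X_\beta(\chi_d)}\X_1(\chi_d)|+|\overline{X_{c\beta}(\chi_d)}\X_c(\chi_d)|$ in Lemma~\ref{D}, where the factor of $2$ came from $|\X_1(\chi_d)|,|\X_c(\chi_d)|\le 2q^{n/2}$ applied to each summand separately. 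The point of Lemma~\ref{C1} is that $|\X_1(\chi)|+|\X_c(\chi)|\le 2\sqrt2\,q^{n/2}$, so the \emph{combined} contribution of the two terms is governed by $2\sqrt2\,q^{n/2}$ rather than by $2\cdot 2q^{n/2}=4q^{n/2}$.

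First I would isolate, in the expression for $R_\beta(m)$ given in Lemma~\ref{D}, the $\X_1(\chi_d)$ and $\X_c(\chi_d)$ factors, treating the accompanying sums $\overline{X_\beta(\chi_d)}$ and $\overline{X_{c\beta}(\chi_d)}$ over $\F_q$ by their crude bounds as before (these are the small Gauss-type sums of size $O(q^{1/2})$ and do not change). Then, instead of bounding each product term individually, I would apply Lemma~\ref{C1} to $|\X_1(\chi)|+|\X_c(\chi)|$, replacing the constant $4$ by $2\sqrt2$ in the estimate for the generic ($d\nmid m_Q$) contribution and correspondingly halving the refined ($d\mid m_Q$) piece. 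Propagating this through the derivation of \eqref{I} yields a lower bound of the same shape but with $4W(m)$ replaced by $2\sqrt2\,W(m)$ and $2W(m_Q)$ replaced by $\sqrt2\,W(m_Q)$. The difference bounds \eqref{Q} and \eqref{R} scale identically, since each was obtained from the same per-character estimate multiplied by $\theta(k)(1-1/p_i)q^{(n-1)/2}$.

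Having rescaled the three ingredients by the factor $\sqrt2/2$ in exactly the coefficients that carry the character-sum size, I would then reassemble them through the sieving inequality precisely as in the proof of Theorem~\ref{T}: the same combination of \eqref{P} with the rescaled versions of \eqref{I}, \eqref{Q}, \eqref{R} produces the stated bound, and the positivity criterion follows by demanding that the brace be positive. The arithmetic is entirely parallel to Theorem~\ref{T}, so the bookkeeping with $\delta$, $\delta_Q$, $W(k)$ and $W(k_Q)$ transfers verbatim up to the constant change. The main obstacle—really the only substantive point—is verifying that Lemma~\ref{C1} is legitimately applicable here: it requires $q\equiv 1\pmod4$ and $n$ an odd prime (so that $Q$ is odd and the fixed nonsquare $c\in\F_q$ stays a nonsquare in $\F_{q^n}$), which are precisely the standing hypotheses, and it requires that the two sums $\X_1(\chi_d)$ and $\X_c(\chi_d)$ genuinely appear paired in each term of $R_\beta(m)$, as they do in Lemma~\ref{D}. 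Everything else is a routine propagation of constants through the inequalities already established.
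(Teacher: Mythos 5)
Your proposal is correct and is essentially the paper's own argument: the paper proves Theorem~\ref{E1} in one sentence by substituting the paired bound $|\X_1(\chi)|+|\X_c(\chi)|\le 2\sqrt{2}\,q^{n/2}$ of Lemma~\ref{C1} for the two separate bounds $|\X_1(\chi)|,|\X_c(\chi)|\le 2q^{n/2}$ in the estimates underlying Theorem~\ref{T}, exactly as you describe, so that every character-sum coefficient scales by $1/\sqrt{2}$ (note only that ``halving'' the $d\mid m_Q$ piece should read ``dividing by $\sqrt{2}$'', which is what your final displayed coefficients correctly reflect). Your check that the hypotheses $q\equiv 1\pmod 4$ and $n$ an odd prime are exactly what make Lemma~\ref{C1} applicable is the right substantive point.
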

\begin{theorem}
\label{G1}
Assume $q\equiv 1 \pmod 4$ and $n$ is an  odd prime.  In the situation insofar of  Theorem~$\ref{Z}$, assume  $\beta =0$ and $\delta >0$. Suppose that
\[
q^{\frac{n}{2}-1}>\sqrt{2}\left(\frac{s-1}{\delta}+2\right)W(k).
\]
Then $\mathcal{N}_0(q^n-1) >0$.
\end{theorem}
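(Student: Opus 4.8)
The plan is to mirror the proof of Theorem~\ref{Z} almost verbatim, making a single substitution: wherever that argument (inherited from Theorem~\ref{T}) bounds the paired sum $\X_1(\chi_d)+\X_c(\chi_d)$ by the crude $4q^{n/2}$ coming from two applications of Proposition~\ref{propo:hybrid}(4), I would instead feed in the sharper estimate $|\X_1(\chi_d)|+|\X_c(\chi_d)|\le 2\sqrt2\,q^{n/2}$ of Lemma~\ref{C1}. Since the expression \eqref{M} for $R_0(m)$ in Lemma~\ref{lemma:C} already packages the two terms as the single sum $\X_1(\chi_d)+\X_c(\chi_d)$, Lemma~\ref{C1} applies directly, with no rearrangement needed. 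Its hypotheses ($q\equiv1\bmod 4$, $n$ an odd prime, $c$ a nonsquare, so that $Q$ is odd and $c$ stays a nonsquare in $\F_{q^n}$) coincide exactly with the standing assumptions here.

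First I would re-run the proof of Theorem~\ref{N} with this replacement. Using $2M_0=q^{n-1}-1$ from Proposition~\ref{m=1} and bounding $R_0(m)$ via \eqref{M} together with Lemma~\ref{C1}, one obtains $|R_0(m)|\le \sqrt2\,(q-1)\bigl(W(m)-1\bigr)q^{\frac n2-1}$. Discarding the harmless term $q^{1-n/2}$ (legitimate since $\sqrt2\,(q-1)\ge q^{1-n/2}$ for all $q\ge 5$, $n\ge 3$) then yields the $\sqrt2$-analogue of \eqref{O}, namely
\[
{\mathcal N}_0(m)\ \ge\ \theta(m)q^{\frac n2-1}\bigl\{q^{\frac n2}-\sqrt2\,W(m)(q-1)\bigr\}.
\]

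Next I would invoke the sieving inequality Proposition~\ref{prop:siev0} and follow the bookkeeping of Theorem~\ref{T}'s proof. Writing ${\mathcal N}_0(kp_i)-(1-1/p_i){\mathcal N}_0(k)=\theta(k)(1-1/p_i)\bigl(R_0(kp_i)-R_0(k)\bigr)$ and noting that $R_0(kp_i)-R_0(k)$ runs only over the $W(k)$ squarefree divisors of $kp_i$ that are divisible by $p_i$, Lemma~\ref{C1} gives $|R_0(kp_i)-R_0(k)|\le \sqrt2\,(q-1)W(k)q^{\frac n2-1}$. Summing the difference terms against $\sum_i(1-1/p_i)=s-1+\delta$ and combining with the improved base estimate applied to $\delta{\mathcal N}_0(k)$ produces
\[
{\mathcal N}_0(Q)\ \ge\ \delta\theta(k)q^{\frac n2-1}\Bigl\{q^{\frac n2}-\sqrt2\,(q-1)\Bigl(\tfrac{s-1}{\delta}+2\Bigr)W(k)\Bigr\}.
\]
Replacing $(q-1)$ by $q$ (the same cosmetic step used to reach the stated form of Theorem~\ref{Z}) and factoring out one power of $q$ converts this to $\delta\theta(k)q^{n/2}\{q^{n/2-1}-\sqrt2(\tfrac{s-1}{\delta}+2)W(k)\}$, so the stated hypothesis forces ${\mathcal N}_0(Q)>0$. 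Finally, Lemma~\ref{L} upgrades a $Q$-free element whose square has trace $0$ to a primitive one with the same property, giving ${\mathcal N}_0(q^n-1)>0$.

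I expect no genuine obstacle: the whole content is that Lemma~\ref{C1} shaves the constant $4$ down to $2\sqrt2$ in precisely the two places (the base estimate and the sieve differences) where the paired sum is controlled, and this constant propagates through unchanged into the final criterion, turning the coefficient $2$ of Theorem~\ref{Z} into $\sqrt2$. The only point demanding a moment's care is verifying that Lemma~\ref{C1} bounds the \emph{sum} $|\X_1|+|\X_c|$ rather than each term separately, which is exactly the shape in which these quantities occur in \eqref{M} and in the sieve differences, so the application is clean.
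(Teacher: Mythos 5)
Your proposal is correct and follows exactly the route the paper intends: the paper's own "proof" of Theorem~\ref{G1} is just the one-line remark that Lemma~\ref{C1} should be applied to \eqref{M} in place of the bound $2q^{n/2}$ per term and then carried through the sieve, and your write-up fills in precisely that computation, with the constant $2\sqrt2$ propagating to $\sqrt2$ in the final criterion as claimed. The details (the base estimate, the absorption of $q^{1-n/2}$, the sieve differences weighted by $s-1+\delta$, and the final appeal to Lemma~\ref{L}) all check out.
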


%
%
\section{Extensions of odd prime degree}\label{sec:existence}
In this section we complete the proof of Theorem~\ref{thm:main} for $n$ an odd prime, insofar as it can be accomplished  theoretically.
 By Lemma~\ref{induction} we can assume $n$ is prime.   If  $q$ is odd, then $q \equiv 1  \pmod 4$.   We distinguish the cases, $n$ a prime exceeding $4$ and  $n=3$, while the case $n=2$ is studied in Section~\ref{sec:quadratics}.
Recall $W(t) =2^{\nu(t)}$ is  the number of the square-free divisors of $t$. The following provides a bound for this number.
\begin{lemma}\label{lem:w(r)}
Let $t, \ell$ be positive integers and let $p_1, \ldots, p_j$ be the distinct prime divisors of $t$ such that $p_i\le 2^\ell$. Then $W(t)\le c_\ell(t)t^{1/\ell}$, where
\[ c_\ell(t)=\frac{2^j}{(p_1\cdots p_j)^{1/\ell}}. \]
In particular,
$c_4(t)< 4.87$ for every $t$.  Indeed, if $t$ is odd, then $c_4(t) < 2.9$. 

Further, for any $t$, $c_6(t)<  46.103$.
 
\end{lemma}
\begin{proof}
The statement is an immediate generalization of \cite{cohenhuczynska03}*{Lemma~3.3} and can be proved using multiplicativity.
\end{proof}

Other specific  applications of Lemma~\ref{lem:w(r)} will be given where they are used. We comment that, in \cite{cohenkapetanakis20}, less successfully, we used a bound for $c_8(t)$.

\subsection{The case  $n>4$, prime}
We suppose $n \geq 5$ is prime and that $4|q-1$ (so that $q \ge 5$). Take any $\beta \in \F_q^*$.   We begin by employing the simplest condition for $\N_\beta(q_0)\neq 0$ to check, that is
\begin{equation}\label{eq:n>4_1}
q^{\frac{n}{4} -\frac{1}{2}} >4 \cdot 4.87/2^{\frac{1}{4}}=16.38 \ldots   ,
\end{equation}
which is a consequence of \eqref{eq:main} and Lemma~\ref{lem:w(r)}.   Now, \eqref{eq:n>4_1} is satisfied for $n\geq 17$ and $q\geq 3$, which means that the case $n>13$ is settled.  It also holds if $q>3$ when $n=11$, if $q>7$  when $n=7$ and if $q>41$ when $n=5$.  The remaining cases can be checked using the simple sufficient condition $q^{\frac{n-1}{2}}>4W(q^5-1)$ except when $(q,n)=(5,5)$. In this last case $5^5-1 =2^2\cdot11 \cdot 71$ so that $Q=11\cdot 71$.  .  With $m=5^5-1$ in Theorem~\ref{mainbound}, $\N_\beta(5^5-1)$ is positive
since $q^2=25 >4W(q^n-1) -2W(Q)-1=32-8-1=23$.

Now, take  $\beta=0$.    Since $Q$ is odd,  after Theorem~\ref{N} with Lemma~\ref{L} and Lemma~\ref{lem:w(r)}, it suffices to show that
\begin{equation}\label{eq:Qodd}
\frac{q^{\frac{n}{2}}}{q-1}>2\cdot 2.9 \cdot \left(\frac{q^n-1}{q-1} \right)^{\frac{1}{4}}, 
\end{equation}
which can be written
\[ \frac{x^2}{x-1} >5.8^4(q-1)^3,\]
where $x=q^n$.  Now the function $x^2/(x-1) $ is increasing for $x>2$ and so, if \eqref{eq:Qodd} holds for the pair $(q,n)$ it holds, then it holds for $(q, n_1)$ where $n_1>n$. Moreover, \eqref{eq:Qodd} holds when $n=7$ and $q \geq 5$.  Hence it holds for larger primes than $7$.    It also holds for $n=5$ provided $q>32$.  Further, if $ n=5$ and $q<32$, we can suppose $Q$ is  such   that its  prime divisors less than 16 are (at most) 5 and 11.  Hence, with $t=Q$ in Lemma~\ref{lem:w(r)}, $c_4(Q)<1.469$ and we can replace 2.9 by 1.469 on the right side of \eqref{eq:n>4_1}  with $n=5$ and this  satisfied whenever $n>6$.  Finally, when $(q,n)=(5,5)$ we have $c_4(Q)<1.099$ and \eqref{eq:Qodd} is satisfied with $2.9$ replaced by $1.099$.

Summarising we have established the following.

\begin{proposition}
\label{prop:main_n>4}
Let $q$ be an odd prime power and $n>4$ a prime with $q \equiv 1 \mod 4$. Then, for any $\beta\in\F_q$, there exists a $2$-primitive  $\xi\in\F_{q^n}$ with 
$\Tr(x)=\beta$.   
\end{proposition}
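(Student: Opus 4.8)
The plan is to establish the two ranges of trace values separately: the case $\beta\neq 0$ via the unsieved criterion of Theorem~\ref{mainbound} (sharpened, if needed, by the sieved Theorem~\ref{T}), and the case $\beta=0$ via Theorem~\ref{N} together with the reduction in Lemma~\ref{L}. In each case the strategy is identical: verify the relevant positivity criterion for $\N_\beta(q^n-1)$ for all but finitely many pairs $(q,n)$ using the crude divisor estimate $W(t)\le c_4(t)\,t^{1/4}$ from Lemma~\ref{lem:w(r)}, and then dispatch the residual small pairs either by sharpening the constant $c_4$ with the actual prime factorisation of $q^n-1$ (or $Q$), or by appealing directly to the unsieved inequality.

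First, for $\beta\neq 0$, I would feed $W(q^n-1)\le 4.87\,(q^n-1)^{1/4}$ into criterion \eqref{eq:main}, discarding the favourable subtracted term $2W(Q)+1$, to reduce positivity to the single clean inequality \eqref{eq:n>4_1}, namely $q^{(n-1)/2}>4\cdot 4.87\cdot(q^n-1)^{1/4}$, which simplifies to $q^{n/4-1/2}>16.38\ldots$. This holds for all $n\ge 17$ and every admissible $q$, settling every $n>13$. For the finitely many residual pairs with $n\in\{5,7,11\}$ and small $q$, I would switch to the simpler sufficient condition $q^{(n-1)/2}>4W(q^n-1)$ evaluated on the exact factorisation of $q^n-1$; the single genuinely tight pair is $(q,n)=(5,5)$, where $5^5-1=2^2\cdot 11\cdot 71$ forces me to retain the full inequality of Theorem~\ref{mainbound}, checking $25>32-8-1=23$.

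Second, for $\beta=0$, Lemma~\ref{L} lets me replace ``primitive with $\Tr(\xi^2)=0$'' by the weaker ``$Q$-free with $\Tr(\xi^2)=0$'', so it suffices to make $\N_0(Q)$ positive via Theorem~\ref{N}. Since $(q,n)$ even with $n$ odd forces $Q$ odd, I would invoke the sharper odd-$t$ bound $c_4(t)<2.9$ of Lemma~\ref{lem:w(r)}, reducing positivity to \eqref{eq:Qodd}, i.e.\ $q^{n/2}/(q-1)>2\cdot 2.9\cdot Q^{1/4}$. Writing $x=q^n$ this becomes $x^2/(x-1)>5.8^4(q-1)^3$; monotonicity of $x^2/(x-1)$ for $x>2$ means that verification at one pair $(q,n)$ propagates to all larger primes, so it is enough to confirm it at $n=7$ (all $q\ge 5$) and $n=5$ (all $q>32$), and then to treat $n=5$, $q<32$ by observing that the only small prime divisors of $Q$ lie in $\{5,11\}$, which improves $c_4(Q)$ to below $1.469$ (and to below $1.099$ in the last pair $(q,n)=(5,5)$).

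The main obstacle I expect is precisely the behaviour at the boundary of applicability: the crude bound $W\le 4.87\,t^{1/4}$ is too lossy near $(q,n)=(5,5)$ and at the small-$q$ end of $n=5$, so the real work is the bookkeeping of which primes can actually divide $q^n-1$ (respectively $Q$) in each residual case, tightening $c_4$ accordingly, and confirming that in the single stubborn pair $(5,5)$ the \emph{unsieved} Theorem~\ref{mainbound} already suffices, so that no computer verification is required anywhere.
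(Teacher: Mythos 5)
Your proposal is correct and follows essentially the same route as the paper: the unsieved criterion \eqref{eq:main} with the $c_4$ bound of Lemma~\ref{lem:w(r)} for $\beta\neq 0$ (falling back on exact factorisations and the full inequality of Theorem~\ref{mainbound} for $(q,n)=(5,5)$), and Theorem~\ref{N} with Lemma~\ref{L}, the odd-$t$ constant $2.9$, and the monotonicity of $x^2/(x-1)$ for $\beta=0$, with the same refinements $c_4(Q)<1.469$ and $c_4(Q)<1.099$ in the residual $n=5$ cases. No substantive difference from the paper's argument.
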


\subsection{The case  $n=3$}
Here we assume that  $n=3$ and $\beta\in\F_q$, where $q \equiv 1 \pmod {4}$ is prime  (so that $ 4|q^3-1$ and $q \ge 5$))

First suppose  $\beta \neq 0$.  Then, with $n=3$,  using  \eqref{eq:n>4_1}  and Theorem~\ref{E1} we obtain the sufficient condition
\[ q^{\frac{1}{4}}  > 2\sqrt{2}\cdot4.87/ 2^{\frac{1}{4}} =11.5828\ldots  . \]
 This is satisfield if  $q>18000$.   So assume $q <18000$ so that $q^3-1<5.5833 \cdot 10^{12}$.   This implies that $\nu(q_0) \leq 11$.   Assuming for the moment that also $\nu(q_0)  \geq 9$,  apply the sieving condition Theorem~\ref{E1} with  $ \nu(k)=2$  (relating to the two smallest primes dividing $q^3-1$) and $s\leq 9$.  The largest  $\delta$ would occur if the set of sieving primes comprise those  from 5 to  31.  Thus $\delta >0.26763$ and the condition is satisfied if $q \geq 361$.  So we can assume $q<361$ and $q^3-1<4.7046\cdot 10^7$. This implies  $\nu (q_0) \leq 8$.  Another round of the sieve yields  a condition that is saisfied  if $q>173$.     So we can assume $q \leq 1169$.   A final general round of sieving produces a condition that is satisfied if $q >128$ so we can assume $q \leq 125$.
 
 The next stage is to apply the full condition of Theorem~\ref{E1} to the remaining prime powers $\le 125$  using the precise decomposition.   Even without sieving (i.e., with $k=q_0$) this yields a condition that is satisfied unless  $q=5$, $9$, $13$, $25$, $29$, $61$ and $121$.  We successfully apply sieving for $q=29$, $61$ and $121$, with $\{67,13,7\}$, $\{ 97,13,5 \}$ and $\{ 37,19,7 \}$ as our set of sieving primes respectively. This concludes the case $\beta\neq 0$.

Now suppose $\beta=0$.  From Theorem~\ref{G1} with $k=Q$, the basic condition to be satisfied is
\[ q^{\frac{1}{2}} > \sqrt{2}W(Q).\]
Now the primes dividing $Q=q^2+q+1$ can only be $3$ or primes $\equiv 1 \mod 6$. Thus, by Lemma~\ref {lem:w(r)} with $\ell=6$ and  $t=Q$ (so the possible  primes less than 64  dividing $t$ lie in the set $\{3,7,13,19,31,37, 43,61\}$ we have $c_6(Q)<5.1211$ and a sufficiernt condition  is
 \[ q^{\frac{1}{2}} >\sqrt{2} \cdot 5.1211\cdot (q^2+q+1)^{\frac{1}{6}}\]
 and this is satisfied when $q \ge 144303$. So assume $q <144303$ which means that $Q< 2.08236 \cdot 10 ^{10}$. Taking into account the nature of the possible primes dividing $Q$, this implies $\nu(Q)\leq 8$.    Temporarily assuming additionally that $\nu(Q) \ge 6$,  apply Theorem~\ref{G1} with $\nu(k)=1$ and $s\leq 7$ so that $\delta\ge 1-\frac{1}{7}-\frac{1}{13} - \cdots -\frac{1}{61} >0.62865$.      Then the condition is satisfied if  $q>1067$.   So, assume $q<1067$ and apply another round of sieving.   We have $\nu(Q) \le 5$ and take $k=3$ and $ \delta >0.69633$. The condition then holds if $q>319$.  A final round of general sieving leads to a condition that is satisfied if $q>185$.  Hence we may assume $q \leq 181$.
By applying  Theorem~\ref{G1} without sieving  using exact prime decompositions yields a condition that is satisfied except when $q=5,9,13,25,29, 37, 49, 61, 81, 109,121$ (1121 values).   Four of these, however, succumb to the sieving process as follows: $q=29$ with sieving primes $\{13,67\}$; $61, \{3,13,97\}$; $81, \{13,73\}$; $109, \{7,571\}$.

Summing up, we have proved the following.
\begin{proposition}
\label{prop:main_n=3}
Let $q$ be a prime power such that $q\equiv 1\pmod{4}$. For any $\beta\in\F_q$, there exists a $2$-primitive $\xi\in\F_{q^3}$  with $\Tr(\xi)=\beta$, unless $q=5$, $9$, $13$ or $25$ and $\beta\in\F_q$ or $q=37$, $49$ or $121$ and $\beta=0$.
\end{proposition}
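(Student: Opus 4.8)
The plan is to prove the Proposition by showing that $\N_\beta(q^3-1)>0$ for every admissible pair $(q,\beta)$ outside the listed exceptions. Since $(q,3)$ is even (which forces $q\equiv 1\pmod 4$), this quantity counts twice the number of $2$-primitive elements of $\F_{q^3}$ with trace $\beta$, so its positivity is exactly what is required. The argument splits according to whether $\beta\neq 0$ or $\beta=0$, because the available character-sum bounds differ: for $\beta\neq 0$ the sharpened estimate of Lemma~\ref{C1} feeds into the sieve of Theorem~\ref{E1}, while for $\beta=0$ the Gauss-sum factor vanishes and, via Theorem~\ref{N} and Lemma~\ref{L}, it suffices to work with $Q=q^2+q+1$ in place of $q^3-1$ through Theorem~\ref{G1}.

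For $\beta\neq 0$ I would begin with the crudest instance of Theorem~\ref{E1}, bounding $W(q^3-1)$ via Lemma~\ref{lem:w(r)} with $\ell=4$. Specialised to $n=3$ the sufficient condition collapses to an inequality of the form $q^{1/4}>11.58\ldots$, which disposes of all $q>18000$. The remaining range is handled by iterating the prime sieve of Theorem~\ref{E1}: each successive upper bound on $q$ caps $\nu(q_0)$, hence the number $s$ of sieving primes, and estimating $\delta$ in the least favourable configuration (sieving over the smallest admissible primes) yields a usable lower bound on it, which in turn lowers the threshold on $q$. Repeating this reduces the survivors to prime powers $q\le 125$, which I would then test against the full condition of Theorem~\ref{E1} using the exact factorisation of $q^3-1$, first with $k=q_0$ and then, for the few borderline $q$, with a tailored choice of sieving primes.

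The case $\beta=0$ runs in parallel through Theorem~\ref{G1} with $k=Q$, and here the arithmetic of $Q=q^2+q+1$ is decisive: its only prime divisors are $3$ and primes $\equiv 1\pmod 6$. This restriction lets me apply Lemma~\ref{lem:w(r)} with $\ell=6$, giving the much smaller constant $c_6(Q)<5.1211$, so the basic condition $q^{1/2}>\sqrt 2\,W(Q)$ already holds once $q\ge 144303$. I would then cascade the sieve exactly as before, the restricted shape of the primes dividing $Q$ keeping $\nu(Q)$ small and $\delta$ comfortably positive at each stage, until only $q\le 181$ remain; these are checked against the exact decomposition of $Q$, with sieving deployed on the stubborn values.

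I expect the genuine difficulty to lie not in the asymptotic estimates but in the terminal finite computation. Once the analytic criterion has been pushed as far as possible, one must decide case by case, from the exact factorisation, whether a choice of sieving primes can close the gap, the challenge being to select that set optimally---maximising $\delta$ while keeping $W(k)$ small. The pairs for which no such choice succeeds are recorded as the exceptions $q\in\{5,9,13,25\}$ (all $\beta$) and $q\in\{37,49,121\}$ (for $\beta=0$); confirming that these truly fail, rather than merely eluding this particular bound, is the most delicate point and would in principle call for separate verification.
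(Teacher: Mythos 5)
Your proposal follows essentially the same route as the paper's own proof: the same split into $\beta\neq 0$ (Theorem~\ref{E1} with $c_4$ from Lemma~\ref{lem:w(r)}, threshold $q>18000$, iterated sieving down to $q\le 125$, then exact factorisations) and $\beta=0$ (Theorem~\ref{G1} with $k=Q$, the restriction of primes dividing $Q$ to $3$ and those $\equiv 1\pmod 6$, $c_6(Q)<5.1211$, threshold $q\ge 144303$, sieving down to $q\le 181$), ending with the same finite list of unresolved prime powers. Your closing caveat is also consistent with the paper, which treats the listed $q$ only as cases not settled theoretically and defers them to the direct computation of Section~\ref{sec:completion}.
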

\section{Quadratic extensions}\label{sec:quadratics}
For $n=2$ we recall that we may assume that $\beta\neq 0$.   From Theorem~\ref{mainbound}, we have the sufficient criterion that $\N_\beta(q^2-1)$ is positive whenever
\[ q^{1/2} > 4W(q^2-1)-2W(q+1)-1.\]
Similarly, there is a corresponding sieving criterion derivable from Theorem~\ref{T}.  The adoption, however, of a  strategy found  in \cite{cohen90} yields stronger results, so we repeat  this approach here.
\begin{lemma}\label{lemma:basis}
For every $\beta\in\F_q^*$, there exist $\theta_1,\theta_2\in\F_{q^2}$, such that $\{\theta_1,\theta_2\}$ is an $\F_q$-basis of $\F_{q^2}$, $\Tr(\theta_1)=\beta$ and $\Tr(\theta_2)=0$.
\end{lemma}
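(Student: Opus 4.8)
The plan is to argue with nothing more than elementary $\F_q$-linear algebra, in the spirit of the counting already used in the proof of Lemma~\ref{induction}. The trace map $\Tr\colon\F_{q^2}\to\F_q$ is $\F_q$-linear, and it is not the zero map: writing $\Tr(\xi)=\xi+\xi^q$, this is a polynomial of degree $q<q^2$, so it cannot vanish on all of $\F_{q^2}$. A nonzero $\F_q$-linear functional onto the one-dimensional space $\F_q$ is automatically surjective, and hence its kernel $K=\{\xi\in\F_{q^2}:\Tr(\xi)=0\}$ is an $\F_q$-subspace of dimension $2-1=1$.

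First I would take $\theta_2$ to be any nonzero element of $K$; such an element exists because $\dim_{\F_q}K=1$, and by construction $\Tr(\theta_2)=0$. Next, using the surjectivity of $\Tr$, I would pick $\theta_1\in\F_{q^2}$ with $\Tr(\theta_1)=\beta$, which is possible precisely because $\beta\in\F_q$ lies in the image of the trace.

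It then remains to verify that $\{\theta_1,\theta_2\}$ is an $\F_q$-basis, and since $\dim_{\F_q}\F_{q^2}=2$ it suffices to show that $\theta_1$ and $\theta_2$ are linearly independent, i.e.\ that $\theta_1$ is not an $\F_q$-multiple of the nonzero element $\theta_2$. Here the hypothesis $\beta\neq0$ does all the work: every element $c\theta_2$ with $c\in\F_q$ satisfies $\Tr(c\theta_2)=c\,\Tr(\theta_2)=0$, whereas $\Tr(\theta_1)=\beta\neq0$, so $\theta_1\notin\F_q\theta_2$. Hence the two elements are independent and form the desired basis with the prescribed traces.

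I do not expect any genuine obstacle here: the construction is essentially forced once the trace is known to be surjective with one-dimensional kernel, and the only point needing care is that the linear-independence step genuinely uses $\beta\neq0$ — which is exactly why the value $\beta=0$ is excluded throughout Section~\ref{sec:quadratics}. In particular no case analysis in $q$ is required, and the same scheme would work verbatim for $\F_{q^n}/\F_q$ with $n>2$ after choosing $\theta_2,\ldots,\theta_n$ to span the kernel.
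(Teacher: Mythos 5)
Your proof is correct and uses essentially the same elementary linear algebra as the paper: both arguments rest on the trace being a surjective $\F_q$-linear functional with the hypothesis $\beta\neq 0$ guaranteeing independence. The only cosmetic difference is that the paper first extends $\{\theta_1\}$ to a basis $\{\theta_1,\theta_2'\}$ and then corrects via $\theta_2=\theta_2'-\frac{\Tr(\theta_2')}{\Tr(\theta_1)}\theta_1$, whereas you pick $\theta_2$ directly from the one-dimensional kernel of the trace.
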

\begin{proof}
The trace function is onto, hence there exists some $\theta_1\in\F_{q^2}$ such that $\Tr(\theta_1)=\beta$. Next, extend $\{\theta_1\}$ to an $\F_q$-basis of $\F_{q^2}$, say $\{\theta_1,\theta_2'\}$ and set $\theta_2 := \theta_2' - \frac{\Tr(\theta_2')}{\Tr(\theta_1)} \cdot \theta_1$. It is clear that $\{\theta_1,\theta_2\}$ satisfies the desired conditions.
\end{proof}
\begin{corollary}\label{cor:basis}
Let $\beta,\theta_1,\theta_2$ be as in Lemma~$\ref{lemma:basis}$. For every $\alpha\in\F_q$, we have that $\Tr(\theta_1+\alpha\theta_2)=\beta$.
\end{corollary}
Fix $\beta\in\F_q^*$ and let $\theta_1,\theta_2$ be as in Lemma~\ref{lemma:basis}. Since $\{\theta_1,\theta_2\}$ are $\F_q$-linearly independent, we have that $\theta_1/\theta_2\not\in\F_q$, that is $\F_{q^2}=\F_q(\theta_1/\theta_2)$. In addition, Corollary~\ref{cor:basis} implies that for every $\alpha\in\F_q$, $\Tr(\theta_1+\alpha\theta_2)=\beta$.

Write $q^2-1=2^\ell q_2$, where $q_2$ is odd, and notice that, since $q$ is odd, $8\mid q^2-1$, that is $\ell \geq 3$, while the fact that $\gcd(q-1,q+1)=2$ implies that $q_2=r_2s_2$ where $r_2$ and $s_2$ are the $2$-free parts of $q+1$ and $q-1$ respectively and they are co-prime. Also, set $q_2',r_2'$ and $s_2'$ as the square-free parts of $q_2,r_2$ and $s_2$ respectively.

Next, take $r\mid q_2'$ and set $\Q_r$ to be the number of $r$-free elements of the form $\theta_1+\alpha\theta_2$ for some $\alpha\in\F_q$, that are squares but not 4th powers. Following the analysis of Section~\ref{sec:sums}, we get that
\begin{align}
\Q_r  & = \sum_{x\in\F_q} \omega_{r}(\theta_1+x\theta_2) w_2(\theta_1+x\theta_2) (1-w_4(\theta_1+x\theta_2)) \nonumber \\
 & =  \sum_{x\in\F_q} \omega_{r}(\theta_1+x\theta_2) (w_2(\theta_1+x\theta_2) -w_4(\theta_1+x\theta_2)) , \label{eq:Qr}
\end{align}
given that, by definition, for all $\xi\in\F_{q^2}^*$, $w_2(\xi)w_4(\xi) = w_4(\xi)$. In addition, notice that, for all $\xi\in\F_{q^2}^*$, we have that
\begin{align}
w_2(\xi)-w_4(\xi) & = \frac{1}{2} \sum_{\delta\mid 2} \sum_{\ord(\chi_\delta)=\delta} \chi_\delta(\xi) - \frac{1}{4} \sum_{\delta\mid 4} \sum_{\ord(\chi_\delta)=\delta} \chi_\delta(\xi) \nonumber \\
 & = \frac{1}{2} \sum_{\delta\mid 4} \sum_{\ord(\chi_\delta) = \delta} \ell_{\delta} \chi_\delta(\xi) , \label{eq:w2-w4}
\end{align}
where,
\[ 
\ell_{\delta} := \begin{cases} 1/2 , & \text{if } \delta=1\text{ or }2 , \\ -1/2 , & \text{if } \delta=4 . \end{cases}
\]
Furthermore, Lemma~\ref{lemma:m-free} implies that an element is $q_2'$-free if and only if it is $2^i$-primitive for some $0\leq i\leq\ell$. It follows that $2$-primitive elements of $\F_{q^2}$ are the $q_2'$-free elements that are squares, but not 4th powers. In other words, it suffices to show that $\Q_{q_2'}\neq 0$, while it is clear that 
\[\Q_{q_2'}\neq 0 \Rightarrow \N_\beta(q^2-1) \neq 0 . \]
 
In \eqref{eq:Qr}, we replace $\omega_{r}$ by its expression and $w_2-w_4$ by its expression in \eqref{eq:w2-w4} and we get that
\begin{equation}\label{eq:Q1}
\frac{4\Q_r}{\theta(r)} = \left( \sum_{\substack{d\mid r \\ \delta\mid 4}} \frac{\mu(d)}{\phi(d)} 2 \ell_\delta \sum_{\substack{\ord(\chi_d)=d \\ \ord(\chi_\delta)= \delta}} \Y(\chi_d,\chi_\delta) \right)  = \left( \sum_{d\mid r}\frac{\mu(d)}{\phi(d)} \sum_{\ord(\chi_d)=d} \Z(\chi_d) \right),
\end{equation}
where
\[
\Y(\chi_d,\chi_\delta) := \sum_{\alpha\in\F_q} \psi_{d,\delta} (\theta_1+\alpha\theta_2) = \psi_{d,\delta}(\theta_2)\sum_{\alpha\in\F_q} \psi_{d,\delta} \left( \frac{\theta_1}{\theta_2} + \alpha \right)
\]
and
\[
\Z(\chi_d) := \Y(\chi_d,\chi_1) + \Y(\chi_d,\chi_2) - \Y(\chi_d,\eta_1) -\Y(\chi_d,\eta_2) ,
\]
where $\psi_{d,\delta}:=(\chi_d\chi_\delta)$ is the product of the corresponding characters, $\eta$ is the quadratic character and $\eta_1,\eta_2$ are the two multiplicative characters of order exactly $4$. Furthermore, since $d$ is odd and $\delta\mid 4$, it is clear that $\psi_{d,\delta}$ is trivial if and only if $d=\delta=1$. 

Recall that $\F_{q^2} = \F_q(\theta_1/\theta_1)$.
First, assume $q\equiv 1\pmod{4}$. Then $4\nmid q+1$. Hence Lemma~\ref{lem:katz} implies that
\begin{enumerate}
  \item for $\chi_1$, $|\Z(\chi_1)| \geq q-1-2\sqrt{q}$,
  \item for $1\neq \ord(\chi_d)\mid q+1$, $|\Z(\chi_d)| \leq 2+2\sqrt{q}$, 
  \item for $\ord(\chi_d)\nmid q+1$, $|\Z(\chi_d)| \leq 4\sqrt{q}$.
\end{enumerate}
Next,  assume that $q\equiv 3\pmod{4}$.  Then $4\mid q+1$ and Lemma~\ref{lem:katz} implies that
\begin{enumerate}
  \item for $\chi_1$, $|\Z(\chi_1)|\geq q-3$,
  \item for $1\neq \ord(\chi_d)\mid q+1$, $|\Z(\chi_d)| \leq 4$,
  \item for $\ord(\chi_d)\nmid q+1$, $|\Z(\chi_d)| \leq 4\sqrt{q}$.
\end{enumerate}
We substitute the above in \eqref{eq:Qr} and arrive at  the following.
\begin{proposition}\label{prop:n=2(1)}
Let $q$, and $r$ be as above and let $r_1$ be the product of the prime divisors of $r$ that divide $q+1$.
\begin{enumerate}
\item If $q\equiv 1\pmod{4}$, then
\begin{equation} \label{eq:n=2_ip(1)}
\frac{4 \Q_r}{\theta(r)} \geq q+1 - 4W(r)\sqrt{q} + 2W(r_1) (\sqrt{q}-1);
\end{equation}
that is, if
\[ q+1 > 4\left( W(r)\sqrt{q} - W(r_1) \left( \frac{\sqrt{q}-1}{2} \right)\right) ,\]
then $\Q_r\neq 0$.
\item If $q\equiv 3\pmod{4}$, then
\begin{equation} \label{eq:n=2_ip(2)}
\frac{4\Q_r}{\theta(r)} \geq q+1 - 4W(r) \sqrt{q} + 4W(r_1)(\sqrt{q}-1);
\end{equation}
that is, if
\[ q+1 > 4(W(r)\sqrt{q} - W(r_1)(\sqrt{q}-1)) ,\]
then $\Q_r\neq 0$.
\end{enumerate} 
\end{proposition}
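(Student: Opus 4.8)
The plan is to start from the exact expression \eqref{eq:Q1},
\[
\frac{4\Q_r}{\theta(r)} = \sum_{d\mid r}\frac{\mu(d)}{\phi(d)}\sum_{\ord(\chi_d)=d}\Z(\chi_d),
\]
and to separate the contribution of the trivial character $d=1$, which serves as the main term, from that of the characters of order $d>1$. For the main term I would invoke the lower bound on $\Z(\chi_1)$ recorded just above the proposition, namely $\Z(\chi_1)\ge q-1-2\sqrt q$ when $q\equiv 1\pmod 4$ and $\Z(\chi_1)\ge q-3$ when $q\equiv 3\pmod 4$. It is essential here that $\Z(\chi_1)$ is not merely bounded in modulus but is genuinely real and positive-dominant, so that it may be retained with its sign; every remaining term is then moved across and estimated in absolute value by the triangle inequality.

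Next I would reduce each divisor's contribution to a single $\Z$-bound. Since $\mu(d)=0$ unless $d$ is square-free, only square-free $d\mid r$ survive, and for each such $d$ there are exactly $\phi(d)$ characters of order $d$, so $\frac{|\mu(d)|}{\phi(d)}\sum_{\ord(\chi_d)=d}|\Z(\chi_d)| \le \max_{\ord(\chi_d)=d}|\Z(\chi_d)|$. It then remains to count the square-free divisors $d>1$ of $r$ lying in each of the two regimes of the preceding case analysis. Because $r\mid q_2'$ is odd and square-free, for such $d$ one has $\ord(\chi_d)=d\mid q+1$ precisely when every prime factor of $d$ divides $q+1$, that is exactly when $d\mid r_1$; hence there are $W(r_1)-1$ divisors with $d\mid q+1$ and $W(r)-W(r_1)$ with $d\nmid q+1$. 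Since the whole analytic input (the bounds on $\Z(\chi_d)$ coming from Lemma~\ref{lem:katz}) is already in hand, the difficulty is purely combinatorial, and I expect this identification of $r_1$ as the collector of the primes of $r$ dividing $q+1$ to be the step most in need of care.

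Finally I would substitute the regime-dependent bounds on $|\Z(\chi_d)|$ and simplify. For $q\equiv 1\pmod 4$ this yields
\[
\frac{4\Q_r}{\theta(r)} \ge (q-1-2\sqrt q) - (W(r_1)-1)(2+2\sqrt q) - (W(r)-W(r_1))\,4\sqrt q,
\]
and expanding the right-hand side collapses the stray $\sqrt q$ and constant terms to give exactly $q+1-4W(r)\sqrt q + 2W(r_1)(\sqrt q - 1)$, which is \eqref{eq:n=2_ip(1)}. The case $q\equiv 3\pmod 4$ is identical in structure, using instead $\Z(\chi_1)\ge q-3$ together with the bound $|\Z(\chi_d)|\le 4$ for $1\neq\ord(\chi_d)\mid q+1$, and the same algebra produces \eqref{eq:n=2_ip(2)}. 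The displayed sufficient conditions for $\Q_r\neq 0$ then follow at once by forcing the resulting lower bound to be positive.
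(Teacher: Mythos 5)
Your proposal is correct and is essentially the paper's own argument: the paper simply says ``we substitute the above in \eqref{eq:Qr}'', and your write-up fills in exactly that substitution, separating the $d=1$ term, counting the square-free divisors $d>1$ via $W(r_1)-1$ and $W(r)-W(r_1)$, and verifying the algebra. The only point worth flagging is that you correctly recognised the need for $\Z(\chi_1)$ to be real with a genuine lower bound (not just a bound on $|\Z(\chi_1)|$), which the paper glosses over.
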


Again, as in previous sections, we employ a sieving inequality as follows.
\begin{proposition}[Sieving inequality]\label{prop:siev1}
Let $r\mid q_2'$. Write   $r=kp_1\cdots p_s$ , where $p_1, \ldots, p_s$ are distinct prime divisors of $r$. Then
\[
\Q_{r} \geq \sum_{i=1}^s \Q_{r_i} - (s-1)\Q_{r_0} .
\]
\end{proposition}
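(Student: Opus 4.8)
The plan is to deduce the inequality from an entirely pointwise estimate on the characteristic functions $\omega_m$, exploiting the fact that the weight $w_2 - w_4$ appearing in $\Q_r$ is nonnegative. Recall from \eqref{eq:Qr} that, writing $\xi_x := \theta_1 + x\theta_2$,
\[
\Q_r = \sum_{x\in\F_q} \omega_r(\xi_x)\bigl(w_2(\xi_x) - w_4(\xi_x)\bigr),
\]
and that $\xi_x \neq 0$ for every $x\in\F_q$, since $\theta_1/\theta_2 \notin \F_q$. Here I write $r_0 = k$ and $r_i = kp_i$, consistent with the notation of Proposition~\ref{prop:siev0}.

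The crux is the pointwise inequality
\[
\omega_{kp_1\cdots p_s}(\xi) \;\geq\; \sum_{i=1}^s \omega_{kp_i}(\xi) - (s-1)\omega_k(\xi), \qquad \xi\in\F_{q^n}^*.
\]
To prove it, I would use Lemma~\ref{lemma:m-free}: the element $\xi$ is $m$-free precisely when no prime divisor of $m$ divides the index $(q^n-1)/\ord(\xi)$. If $\xi$ is not $k$-free, then every term on both sides vanishes and the inequality reads $0 \geq 0$. Otherwise $\omega_k(\xi)=1$; letting $t$ be the number of indices $i$ for which $p_i$ divides $(q^n-1)/\ord(\xi)$, one has $\sum_{i=1}^s \omega_{kp_i}(\xi) = s-t$, while $\omega_{kp_1\cdots p_s}(\xi)$ equals $1$ if $t=0$ and $0$ otherwise. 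The inequality then becomes $\omega_r(\xi) \geq 1-t$, which holds with equality when $t=0$ and trivially when $t\geq 1$ (the right-hand side being $\leq 0$).

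To finish, I would observe that $w_2 - w_4$ is the indicator function of the elements that are squares but not fourth powers, so that $w_2(\xi)-w_4(\xi) \in \{0,1\}$ and in particular is nonnegative. Multiplying the pointwise inequality by this nonnegative weight at $\xi=\xi_x$ and summing over $x\in\F_q$ yields
\[
\Q_r \;\geq\; \sum_{i=1}^s \Q_{kp_i} - (s-1)\Q_k,
\]
which is the claimed bound. I do not expect a genuine obstacle here: the only point demanding care is the nonnegativity of the weight, which is exactly what licenses the sign-sensitive sieving step; beyond this the argument is the standard prime sieve of \cite{cohenhuczynska03} transcribed to the present counting function, and it is the reason the analysis of Proposition~\ref{prop:siev0} carries over verbatim.
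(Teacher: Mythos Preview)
Your proof is correct and is precisely the standard argument. The paper itself gives no proof for this proposition: it is stated bare, as a direct analogue of Proposition~\ref{prop:siev0} (which is likewise quoted without proof from \cite{cohenhuczynska03}), so there is nothing further to compare against. Your identification $r_0=k$, $r_i=kp_i$ and your observation that the nonnegativity of the weight $w_2-w_4$ is what licenses passing from the pointwise sieve inequality on $\omega_m$ to the inequality on $\Q_r$ are exactly the points one needs.
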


Write $q_2'=kp_1\cdots p_s$, where $p_1$,\ldots,$p_s$ are distinct primes and $\varepsilon := 1-\sum_{i=1}^s 1/p_i$, with $\varepsilon=1$ when $s=0$. Further, suppose that $p_i\mid q+1$ for $i=1,\ldots,r$ and $p_i\nmid q+1$ for $i=r+1,\ldots,s$. Finally, set $\varepsilon' := 1-\sum_{i=1}^r 1/p_i$ and let $k_1$ be the part of $k$, that divides $q+1$.
\begin{theorem}\label{thm:siev4}
Let $q$ and $q_2'$ be as above. Additionally, let $\varepsilon$ and $\varepsilon'$ be as above and assume that $\varepsilon>0$.
\begin{enumerate}
\item If $q\equiv 1\pmod{4}$ and
\[ q+1 > 4\left[ W(k)\left( \frac{s-1}{\varepsilon} +2 \right) \sqrt{q} - W(k_1) \left( \frac{r-1+\varepsilon'}{\varepsilon} +1 \right) \left( \frac{\sqrt{q}-1}{2} \right)\right] ,\]
then $\Q_{q_2'}\neq 0$.
\item If $q\equiv 3\pmod{4}$ and
\[ q+1 > 4\left[  W(k)\left( \frac{s-1}{\varepsilon} +2 \right)\sqrt{q}- W(k_1) \left( \frac{r-1+\varepsilon'}{\varepsilon} +1 \right) (\sqrt{q}-1) \right]  ,\]
then $\Q_{q_2'}\neq 0$.
\end{enumerate} 

In particular, it is the case that $\Q_{q_2'} > 0$ whenever
\begin{equation}\label{eq:both}
q\ge4 W(k) \left(\frac{s-1}{\varepsilon}+2 \right).
\end{equation}
\end{theorem}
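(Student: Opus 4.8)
The plan is to deduce the single inequality~\eqref{eq:both} as a clean sufficient condition that forces \emph{both} sieved criteria (1) and (2) of the theorem to hold, so that one uniform threshold covers the two residue classes $q\equiv 1$ and $q\equiv 3\pmod 4$. First I would observe that in each of the two displayed conditions the term carrying $W(k_1)$ sits inside the bracket with a minus sign and is multiplied by the non-negative quantity $\sqrt q-1$ together with the non-negative factor $\frac{r-1+\varepsilon'}{\varepsilon}+1$; discarding it therefore only strengthens the requirement. Consequently it suffices to establish the correction-free inequality
\[ q+1 > 4\,W(k)\left(\frac{s-1}{\varepsilon}+2\right)\sqrt q , \]
since its right-hand side dominates those of both (1) and (2) simultaneously, whence $\Q_{q_2'}\neq 0$ in either case.

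Writing $L:=4\,W(k)\bigl(\tfrac{s-1}{\varepsilon}+2\bigr)$ for the quantity on the right of~\eqref{eq:both}, the reduced inequality reads $q+1>L\sqrt q$. The essential comparison is thus between the main term, of exact order $q$, and the leading error $L\sqrt q$, of order $\sqrt q$: once the hypothesis guarantees $L\sqrt q\le q$, the strict surplus $+1$ closes the inequality. I would make this quantitative by tracking how the factor $\sqrt q$ in $L\sqrt q$ is absorbed into one factor of $q=\sqrt q\cdot\sqrt q$, and then feeding the resulting bound back through the two cases, using $\varepsilon>0$ to keep the sieve weights positive throughout.

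The delicate point, which I expect to be the main obstacle, is reconciling the genuinely square-root-sized character-sum errors with a threshold of the linear shape appearing in~\eqref{eq:both}. This is cleanest when $q\equiv 3\pmod 4$: there the four sums comprising $\Z(\chi_d)$ cancel in pairs whenever $\ord(\chi_d)\mid q+1$, so $\Z(\chi_d)=0$ exactly for such $d>1$, and the only surviving errors come from orders not dividing $q+1$, precisely the ones isolated by the $W(k)$--$W(k_1)$ split. For $q\equiv 1\pmod 4$ this cancellation is only partial, since the order-$4$ twists $\eta_1,\eta_2$ do not have order dividing $q+1$ and survive with modulus $\sqrt q$; it is here that one must argue most carefully that the favorable $W(k_1)$ correction, together with the surplus $+1$, still absorbs the residual $\sqrt q$-terms. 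Confirming positivity at the boundary where $L\sqrt q=q$, and checking that no error of order $\sqrt q$ is left unmatched in the $q\equiv 1$ case, is what completes the argument.
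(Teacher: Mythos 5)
Your proposal treats parts (1) and (2) of the theorem as given and only derives the ``in particular'' clause \eqref{eq:both} from them; but (1) and (2) are the substance of the theorem, and the paper's proof is devoted entirely to establishing them. That argument starts from the sieving inequality of Proposition~\ref{prop:siev1} in the form $\Q_{q_2'}\ge\varepsilon\Q_k-\sum_{i=1}^s\big|\Q_{kp_i}-(1-\frac{1}{p_i})\Q_k\big|$, then uses the exact identity \eqref{eq:siev_ip(1)} to write each difference $\Q_{kp_i}-(1-\frac{1}{p_i})\Q_k$ as a character sum running only over characters whose order is divisible by $p_i$, and bounds it by $\theta(k)(1-\frac{1}{p_i})\cdot 2\sqrt q\,W(k)$ when $p_i\nmid q+1$ and by the improved quantity involving $W(k_1)$ when $p_i\mid q+1$ (using $W(kp_i)=2W(k)$ and $W(k_1p_i)=2W(k_1)$); combining these with the unsieved bounds \eqref{eq:n=2_ip(1)} and \eqref{eq:n=2_ip(2)} gives the stated criteria. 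None of this sieve bookkeeping appears in your write-up, so the proof of (1) and (2) is missing.

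Two further points on what you do write. First, your claim that for $q\equiv 3\pmod 4$ the four sums in $\Z(\chi_d)$ cancel exactly when $1\ne\ord(\chi_d)\mid q+1$ is false: by Lemma~\ref{lem:katz} each $\Y(\chi_d,\chi_\delta)$ then equals $-\psi_{d,\delta}(\theta_2)$, and the resulting combination $-\chi_d(\theta_2)\big(1+\eta(\theta_2)-\eta_1(\theta_2)-\eta_2(\theta_2)\big)$ has modulus $4$, not $0$, whenever $\theta_2$ is a square but not a fourth power; the correct (and used) bound is $|\Z(\chi_d)|\le 4$. Second, your reduction of \eqref{eq:both} is off by a square root: discarding the non-negative $W(k_1)$ correction leaves the requirement $q+1>L\sqrt q$ with $L=4W(k)\left(\frac{s-1}{\varepsilon}+2\right)$, which is equivalent to $\sqrt q+q^{-1/2}>L$ and so follows from $\sqrt q\ge L$, not from $q\ge L$ as \eqref{eq:both} is printed (the application $\sqrt q\ge 2W(q^2-1)$ at the start of the existence discussion in Section~\ref{sec:quadratics} shows $\sqrt q\ge L$ is what is intended); your assertion that the hypothesis ``guarantees $L\sqrt q\le q$'' silently makes this substitution and needs to be made explicit.
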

\begin{proof}
Proposition~\ref{prop:siev1} implies that
\[
\Q_{q_2'}\ \geq\ \sum_{i=1}^s \Q_{kp_i} - (s-1) \Q_{k} 
\ \geq\ \varepsilon \Q_{k} - \sum_{i=1}^s \left| \Q_{kp_i} - \left( 1-\frac{1}{p_i} \right) \Q_{k} \right| . \label{eq:siev_ip(0)}
\]
Notice that $\theta(kp_i) = \theta(k)(1-1/p_i)$. It follows from \eqref{eq:Q1} that
\begin{equation} \label{eq:siev_ip(1)}
\Q_{kp_i} - \left( 1-\frac{1}{p_i} \right) \Q_{k} =\frac{\theta(k)(p_i-1)}{4p_i} \sum_{d\mid k} \frac{\mu(dp_i)}{\phi(dp_i)} \sum_{\ord(\chi_{dp_i})=dp_i} \Z(\chi_{dp_i}) .
\end{equation}

First assume that $q\equiv 1\pmod{4}$. We repeat the arguments that led us to \eqref{eq:n=2_ip(1)} for \eqref{eq:siev_ip(1)}. If $i=1,\ldots,r$, i.e., $p_i\mid q+1$, then
\begin{multline*}
\left| \Q_{kp_i} - \left( 1-\frac{1}{p_i} \right) \Q_{k} \right| \leq \\ \theta(k) \left( 1-\frac{1}{p_i} \right) \Big[ 2\sqrt{q} (W(k)-W(k_1)) + (1+\sqrt{q}) W(k_1) \Big] ,
\end{multline*}
since $W(kp_i) = 2W(k)$ and $W(k_1p_i) = 2W(k_1)$. Similarly, if $i=r+1,\ldots,s$, i.e., $p_i\nmid q+1$, then
\[
\left| \Q_{kp_i} - \left( 1-\frac{1}{p_i} \right) \Q_{k}(\theta,\alpha) \right| \leq \theta(k) \left( 1-\frac{1}{p_i} \right) 2\sqrt{q} W(k).
\]
The combination of \eqref{eq:n=2_ip(1)}, \eqref{eq:siev_ip(0)}, \eqref{eq:siev_ip(1)} and the above bounds yields the desired result.

The $q\equiv 3\pmod{4}$ case follows similarly, but with \eqref{eq:n=2_ip(2)} in mind.
\end{proof}
We are now ready to proceed with our existence results.
We start with the simplest condition to check, which follows from \eqref{eq:both} and the fact that $W(q_2)=W(q^2-1)/2$, namely
\[ \sqrt{q} \geq 2W(q^2-1) . \]
Now, recalling that $8|(q^2-1)$ amd  apply the bound of for $c_6(t)$ in Lemma~\ref{lem:w(r)}, with $t=(q^2-1)/4$ to $W(q_0) =W((q^2-1)/4)$  we deduce that a sufficient condition is 
\[ q > (2\cdot46.103)^6 /4 \simeq 1.536 \cdot 10^{11}.\]
So assume $q<1 .537 \cdot 10^{11}$ and so $q^2-1 < 2.363\cdot 10^{22}$.   It follows that $\nu(q^2-1) \leq 17$ and so $\nu(q_2) \leq 16$.  For the moment suppose also $\nu(q^2-1) \ge 11$. We proceed to employ the criterion \eqref{eq:both} with $\nu(k) = 2$ and $s \le 14$ so that $\varepsilon \geq 1- \frac{1}{7}- \frac{1}{11} \ - \cdots - \frac{1}{59} \geq 0.335869$, the sum relating to the 14 primes between 7 and 59.  Then \eqref{eq:both} holds whenever $q >488500$.  Now assume $q<488500$ which implies that $q^2-1 <2.3863\cdot 10^{11}$.  Hence $q^2-1 16893\cdot 10^9$ and  $\nu(q^2-1) \le10$. Suppose  $\nu(q^2-1)=10$ and  apply the sieve with  $\nu(k)=1, s=8$.  The condition holds if $q>41101$.  So assume $q < 41101$ so that $q^2-1 \leq 1.6893 \times 10^9$.   This yields $\nu(q^2-1) \leq 9$.    Again apply the sieve with $\nu(k)=1, s \leq 7$.  The condition then holds if $q> 25457$.  A final round of sieving with $q< 25457$, $q^2-1 < 6.4803 \cdot  10^8$   yields success if $q>14850$.  So we can suppose $q<14850$, $q^2-1 <2.2053\cdot  10^8$.
In the interval $3\leq q\leq 14850$, there are exactly $1784$ odd prime powers and we first attempt to use Proposition~\ref{prop:n=2(1)}. A quick computation reveals that, in the interval in question, there are exactly $744$ odd prime powers, where \eqref{eq:n=2_ip(1)} or \eqref{eq:n=2_ip(2)}, accordingly, do not hold, with all the mentioned quantities explicitly computed, with $q=14821$ being the largest among them.

Then, we move on to the sieving part, i.e., Theorem~\ref{thm:siev4}. Namely, we attempt to satisfy the conditions of Theorem~\ref{thm:siev4} as follows. Until we run out of prime divisors of $k$, or until $\varepsilon\leq 0$, we always add to the set of sieving primes, that is, the primes $p_1,\ldots ,p_s$ in Theorem~\ref{thm:siev4}, the largest prime divisor not already contained in the set. If, for one such set of sieving primes, the condition of Theorem~\ref{thm:siev4} is valid, then
the desired result holds for the prime power in question.

This procedure was successful, for most of the aforementioned $744$ prime powers. The $101$ exceptional prime powers, for which this procedure failed are listed in the $n=2$ line of  Table~\ref{tab:exc}.
%

So, to sum up our results so far,  we have proved the following.
\begin{theorem}\label{thm:main_n=2}
For every odd prime power $q$ not listed in Table~\ref{tab:exc} and $\beta\in\F_{q}^*$, there exists some  $2$-primitive $\xi\in\F_{q^2}$ such that $\Tr(\xi)=\beta$.
\end{theorem}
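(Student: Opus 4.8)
The plan is to reduce the existence of a $2$-primitive element of $\F_{q^2}$ with prescribed nonzero trace $\beta$ to the nonvanishing of the counting function $\Q_{q_2'}$ introduced above, and then to establish that nonvanishing by a character-sum estimate refined by prime sieving. By the discussion preceding Proposition~\ref{prop:n=2(1)}, a $2$-primitive element is precisely a $q_2'$-free element that is a square but not a fourth power, so it suffices to show $\Q_{q_2'}\neq 0$ for every relevant $q$, and this is exactly what Proposition~\ref{prop:n=2(1)}, Theorem~\ref{thm:siev4} and the Katz-type bound in Lemma~\ref{lem:katz} are designed to deliver. The final theorem is then simply the assertion that outside the finite exceptional set (collected in Table~\ref{tab:exc}) the relevant inequality can be met.

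First I would invoke the cleanest sufficient condition, inequality \eqref{eq:both} with $k=q_2'$ (no sieving), which after using $W(q_2)=W(q^2-1)/2$ becomes $\sqrt q\ge 2W(q^2-1)$. Bounding $W$ via Lemma~\ref{lem:w(r)} (with $\ell=6$ applied to $(q^2-1)/4$, exploiting $8\mid q^2-1$) reduces the problem to $q$ below an explicit threshold, here about $1.54\cdot10^{11}$. The core of the argument is then a descent: I would iterate the sieving criterion of Theorem~\ref{thm:siev4}, at each stage bounding $\nu(q^2-1)$ from the current size restriction on $q$, choosing $\nu(k)$ and the number of sieving primes $s$ to maximise $\varepsilon=1-\sum 1/p_i$ over the worst-case configuration of small primes, and thereby lowering the threshold successively (through $q<488500$, then $q<41101$, then $q<25457$, and finally $q<14850$). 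Each round is a routine but careful optimisation: the worst case occurs when the sieving primes are forced to be the smallest available, minimising $\varepsilon$.

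For the remaining finite range $3\le q\le 14850$ — about $1784$ odd prime powers — the approach is genuinely computational. I would first test the unsieved inequalities \eqref{eq:n=2_ip(1)} and \eqref{eq:n=2_ip(2)} of Proposition~\ref{prop:n=2(1)} using the exact factorisation of each $q^2-1$; this disposes of all but $744$ values. For those, I would run the greedy sieve of Theorem~\ref{thm:siev4}, repeatedly adjoining the largest unused prime divisor of $k$ until either the condition holds or $\varepsilon$ becomes nonpositive. Whenever some admissible set of sieving primes validates the Theorem~\ref{thm:siev4} inequality, that $q$ is settled; the $101$ genuinely stubborn prime powers that survive are exactly those recorded in the $n=2$ row of Table~\ref{tab:exc}.

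The main obstacle is not conceptual but lies in controlling the sieve efficiently across the huge initial range: the estimates from Lemma~\ref{lem:katz} give only $O(\sqrt q)$ savings per character, so the decisive leverage comes entirely from how aggressively one can sieve out small primes, and the delicate point is to choose, at each stage, the split $q_2'=kp_1\cdots p_s$ that keeps $\varepsilon$ safely positive while maximising the reduction in the threshold. I would expect the genuinely hard effort to be twofold: verifying rigorously that the worst-case small-prime configurations used to bound $W(k)$ and $\varepsilon$ in each descent round really are extremal, and then certifying by direct computation that the $101$ listed exceptions cannot be cleared by any admissible sieving set — leaving them as true (or computationally confirmed) failures of the inequality rather than artefacts of a suboptimal prime choice.
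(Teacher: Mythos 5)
Your proposal follows the paper's own argument essentially step for step: reduce to $\Q_{q_2'}\neq 0$ via the square-but-not-fourth-power characterisation, apply the crude criterion \eqref{eq:both} with the $c_6$ bound to get $q\lesssim 1.54\cdot 10^{11}$, descend by iterated sieving through the same thresholds ($488500$, $41101$, $25457$, $14850$), and finish computationally on the $1784$ remaining prime powers, of which $744$ need sieving and $101$ survive as the listed exceptions. This matches the paper's proof in both structure and all quantitative details, so there is nothing further to add.
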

\section{Completion of the proof of Theorem~\ref{thm:main}} \label{sec:completion}
Then we move on to an explicit verification for the remaining possible exceptions, that is the pairs of Table~\ref{tab:exc}. For this purpose,  for all the corresponding pairs $(q,n)$, we check whether the set of the traces of the $2$-primitive elements of $\F_{q^n}$ coincides with $\F_q^*$, when $n=2$, and with $\F_q$, when $n= 3$. This test required about 3-4 minutes of computer time in a modern mid-range laptop.

\begin{table}[t] \footnotesize
\begin{center}
\begin{tabular}{|l|p{0.8\textwidth}|l|} \hline
$n$ & $q$ & \# \\ \hline\hline
$2$ & $3$, $5$, $7$, $9$, $11$, $13$, $17$, $19$, $23$, $25$, $27$, $29$, $31$, $37$, $41$, $43$, $47$, $49$, $53$, $59$, $61$, $67$, $71$, $73$, $79$, $81$, $83$, $89$, $97$, $101$, $103$, $109$, $113$, $121$, $125$, $127$, $131$, $137$, $139$, $149$, $151$, $157$, $169$, $173$, $181$, $191$, $197$, $199$, $211$, $229$, $239$, $241$, $269$, $281$, $307$, $311$, $331$, $337$, $349$, $361$, $373$, $379$, $389$, $409$, $419$, $421$, $461$, $463$, $509$, $521$, $529$, $569$, $571$, $601$, $617$, $631$, $659$, $661$, $701$, $761$, $769$, $841$, $859$, $881$, $911$, $1009$, $1021$, $1231$, $1289$, $1301$, $1331$, $1429$, $1609$, $1741$, $1849$, $1861$, $2029$, $2281$, $2311$, $2729$, $3541$ & 101 \\ \hline
$3$ & $5$, $9$, $13$, $25$, $37$, $49$, $121$ &  \;\; 7 \\ \hline\hline
\multicolumn{2}{|r|}{\textbf{Total:}} & 108 \\ \hline
\end{tabular}
\end{center}
\caption{Pairs $(q,n)$ for which the existence of $2$-primitive elements with prescribed trace was not dealt with theoretically.\label{tab:exc}}
\end{table}
The computations validated all the existence claims in Theorem~\ref{thm:main} for all the pairs $(q,n)$ of Table~\ref{tab:exc}  with the exception, when $n=2$, of $q=3,5,7,9,11,13$ and $31$,  these being genuine exceptions. In particular, they were successful  for all pairs $(q,n)$ with $n= 3$.  Finally, for the exceptions we present the possible traces of $2$-primitive elements in Table~\ref{tab:n=2_traces}. This completes the proof of Theorem~\ref{thm:main}.
\begin{table}[hbt]\footnotesize
\begin{center}
\begin{tabular}{|l|p{0.55\textwidth}|l|} \hline
$q$ & Traces & \# \\ \hline\hline
$3$ & $0$ & 1\\ \hline
$5$ & $2$, $3$ & 2\\ \hline
$7$ & $1$, $2$, $5$, $6$ & 4\\ \hline
$9^*$ & $\pm 1$, $ \pm i$ & 4 \\ \hline
$11$ & $1$, $2$, $3$, $4$, $7$, $8$, $9$, $10$ & 8 \\ \hline
$13$ & $1$, $3$, $4$, $5$, $6$, $7$, $8$, $9$, $10$, $12$ & 10 \\ \hline
$31$ & $1$, $2$, $3$, $4$, $5$, $6$, $7$, $8$, $9$, $10$, $12$, $13$, $14$, $15$, $16$, $17$, $18$, $19$, $21$, $22$, $23$, $24$, $25$, $26$, $27$, $28$, $29$, $30$ & 28 \\ \hline
\multicolumn{3}{l}{\rule{0em}{1.2em}\textbf{*} For $q=9$, $i$ is a root of $X^2+1 \in\F_3[X]$}
\end{tabular}
\end{center}
\caption{Traces of $2$-primitive elements of $\F_{q^2}$ for $q=3,5,7,9,11,13$ and $31$.\label{tab:n=2_traces}}
\end{table}


\begin{bibdiv}
\begin{biblist}

\normalsize
\baselineskip=17pt

\bib{cohen90}{article}{
      author={Cohen, Stephen~D.},
       title={Primitive elements and polynomials with arbitrary trace},
        date={1990},
     journal={Discrete Math.},
      volume={83},
             pages={1\ndash 7},
}

\bib{cohen10}{inproceedings}{
      author={Cohen, Stephen~D.},
       title={Primitive elements on lines in extensions of finite fields},
        date={2010},
   booktitle={Finite fields: Theory and applications},
      editor={McGuire, Gary},
      editor={Mullen, Gary~L.},
      editor={Panario, Daniel},
      editor={Shparlinski, Igor~E.},
      series={Contemp. Math.},
      volume={518},
   publisher={American Mathematical Society},
     address={Province, RI},
       pages={113\ndash 127},
}

\bib{cohenhuczynska03}{article}{
      author={Cohen, Stephen~D.},
      author={Huczynska, Sophie},
       title={The primitive normal basis theorem {{--}} without a computer},
        date={2003},
     journal={J. London Math. Soc.},
      volume={67},
      number={1},
       pages={41\ndash 56},
}

\bib{cohenkapetanakis20}{article}{
      author={Cohen, Stephen~D.},
      author={Kapetanakis, Giorgos},
       title={The trace of $2$-primitive elements of finite fields},
        date={2020},
     journal={Acta Arith.},
      volume={192},
      number={4},
       pages={397\ndash 416},
}

\bib{cohenpresern05}{article}{
      author={Cohen, Stephen~D.},
      author={Pre\u{s}ern, Mateja},
       title={Primitive finite field elements with prescribed trace},
        date={2005},
     journal={Southeast Asian Bull. Math.},
      volume={29},
      number={2},
       pages={383\ndash 300},
}

\bib{hachenbergerjungnickel20}{book}{
      author={Hachenberger, Dirk},
      author={Jungnickel, Dieter},
       title={Topics in Galois Fields},
   publisher={Springer Nature},
   address={Switerland},
        date={2020}
       }

\bib{huczynskamullenpanariothomson13}{article}{
      author={Huczynska, Sophie},
      author={Mullen, Gary~L.},
      author={Panario, Daniel},
      author={Thomson, David},
       title={Existence and properties of $k$-normal elements over finite
  fields},
        date={2013},
     journal={Finite Fields Appl.},
      volume={24},
       pages={170\ndash 183},
}
\bib{hua82}{book}{
      author={Hua, Loo Keng},
  translator={Shiu, Peter},
       title={Introduction to number theory},
   publisher={Springer-Verlag},
     address={Berlin Heidelberg},
        date={1982},
}

\bib{katz89}{article}{
      author={Katz, Nicholas~M.},
       title={An estimate for character sums},
        date={1989},
     journal={J. Amer. Math. Soc.},
      volume={2},
      number={2},
       pages={197\ndash 200},
}

\bib{lidlniederreiter97}{book}{
	author={Lidl,Rudolf},
	author ={Niederreiter, Harald},
	title={Finite Fields},
	series={Encylopedia of Mathematics and its Applications},
	publisher={Cambridge University Press} 
	addess={Cambridge}
	date={1997}

}

\bib{schmidt76}{book}{
      author={Schmidt, Wolfgang~M.},
       title={Equations over finite fields, an elementary approach},
   publisher={Springer-Verlag},
     address={Berlin Heidelberg},
        date={1976},
}

\bib{wenpeng02}{article}{
      author={Zhang, Wenpeng},
       title={Moments of generalized quadratic Gauss sums weighted by $L$-functions},
        date={2002},
     journal={J. Number Theory},
      volume={92},
      number={2},
       pages={304\ndash 314},
}

\end{biblist}
\end{bibdiv}

\end{document}